\newtheorem{tw}{Theorem}[section]
\newtheorem{prop}[tw]{Proposition}
\newtheorem{wn}[tw]{Corollary}
\theoremstyle{remark}
\newtheorem{uw}[tw]{Remark}
\theoremstyle{definition}
\newtheorem*{df}{Definition}
\newcommand{\cal}[1]{\mathcal{#1}}
\newcommand{\podz}{\subseteq}
\newcommand{\sig}{\sigma}
\newcommand{\eps}{\varepsilon}
\newcommand{\ro}{\varrho}
\newcommand{\kre}[1]{\overline{#1}}
\newcommand{\gen}[1]{\langle #1 \rangle}
\newcommand{\map}[3]{#1\colon #2\to #3}
\newcommand{\field}[1]{\mathbb{#1}}
\newcommand{\zz}{\field{Z}}
\newcommand{\kk}{\field{K}}
\newcommand{\rr}{\field{R}}
\newcommand{\st}{\;|\;}
\newcommand{\cM}{{\cal M}}
\newcommand{\lst}[2]{{#1}_1,\dotsc,{#1}_{#2}}
\begin{document}

\numberwithin{equation}{section}

\title[A finite presentation for the hyperelliptic\ldots]
{A finite presentation for the hyperelliptic mapping class group of a nonorientable surface}

\author{Micha\l\ Stukow}

\thanks{Supported by MNiSW N201 366436 and NCN 2012/05/B/ST1/02171.}
\address[]{
Institute of Mathematics, University of Gda\'nsk, Wita Stwosza 57, 80-952 Gda\'nsk, Poland }

\email{trojkat@mat.ug.edu.pl}


\keywords{Mapping class group, Nonorientable surface, Braid group, Hyperelliptic surface} \subjclass[2000]{Primary 57N05;
Secondary 20F38, 57M99}

\begin{abstract}
We obtain a simple presentation of the hyperelliptic mapping class group ${\cal M^h}(N)$ of a nonorientable surface $N$. As
an application we compute the first homology group of ${\cal M^h}(N)$ with coefficients in $H_1(N;\zz)$.
\end{abstract}

\maketitle%
\section{Introduction}%
Let $N_{g,s}^n$ be a smooth, nonorientable, compact surface of genus $g$ with $s$ boundary components and $n$ punctures. If
$s$ and/or $n$ is zero, then we omit it from the notation. If we do not want to emphasise the numbers $g,s,n$, we simply write
$N$ for a surface $N_{g,s}^n$. Recall that $N_{g}$ is a connected sum of $g$ projective planes, and $N_{g,s}^n$ is
obtained from $N_g$ by removing $s$ open disks and specifying the set $\Sigma=\{\lst{z}{n}\}$ of $n$ distinguished points in
the interior of $N_g$.

Let ${\textrm{Diff}}(N)$ be the group of all diffeomorphisms $\map{h}{N}{N}$ such that $h$ is the identity on each
boundary component and $h(\Sigma)=\Sigma$. By ${\cal{M}}(N)$ we denote the quotient group of ${\textrm{Diff}}(N)$ by
the subgroup consisting of maps isotopic to the identity, where we assume that isotopies fix $\Sigma$ and are the identity
on each boundary component. ${\cal{M}}(N)$ is called the \emph{mapping class group} of $N$. 

The mapping class group ${\cal{M}}(S_{g,s}^n)$ of an orientable surface is defined analogously, but we consider only
orientation preserving maps. If we include orientation reversing maps, we obtain the so-called \emph{extended mapping class
group} ${\cal{M}}^{\pm}(S_{g,s}^n)$.

Suppose that the closed orientable surface $S_g$ is embedded in $\rr^3$ as shown in Figure \ref{r01}, in such a way that it
is invariant under reflections across $xy,yz,xz$ planes. Let $\map{\ro}{S_g}{S_g}$ be the \emph{hyperelliptic involution},
i.e. the half turn about the $y$-axis.
\begin{figure}[h]
\begin{center}
\includegraphics[width=0.9\textwidth]{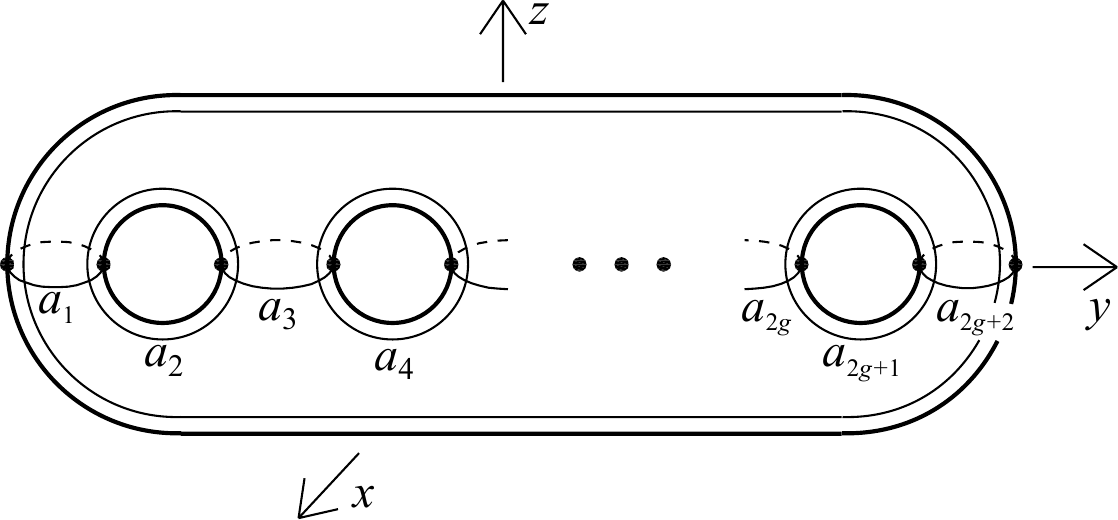}
\caption{Surface $S_g$ embedded in $\rr^3$.}\label{r01} %
\end{center}
\end{figure}
The \emph{hyperelliptic mapping class group} $\cM^h(S_g)$ is defined to be the centraliser of $\ro$ in $\cM(S_g)$.
In a similar way we define the \emph{extended hyperelliptic mapping class group} $\cM^{h\pm}(S_g)$ to be the
centraliser of $\ro$ in $\cM^{\pm}(S_g)$.
\subsection{Background}
The hyperelliptic mapping class group turns out to be a very interesting and important subgroup of the mapping class group.
Its algebraic properties have been studied extensively -- see \cite{Bir-Hil,Kawazumi1} and references there. Although
$\cM^h(S_g)$ is an infinite index subgroup of $\cM(S_g)$ for $g\geqslant 3$, it plays surprisingly important role in
studying its algebraic properties. For example Wajnryb's simple presentation \cite{Wajn_pre} of the mapping class group
$\cM(S_g)$ differs from the presentation of the group $\cM^h(S_g)$ by adding one generator and a few relations.
Another important phenomenon is the fact, that every finite cyclic subgroup
of maximal order in $\cM(S_g)$ is conjugate to a subgroup of $\cM^h(S_g)$ \cite{MaxHyp}.

Homological computations play a prominent role in the theory of mapping class groups. Let us mention that in the 
case of the hyperelliptic mapping class group, B\"odigheimer, Cohen and Peim \cite{BoedCoh} computed $H^*(\cM^h(S_g);\kk)$ with coefficients
in any field $\kk$. Kawazumi showed in \cite{Kawazumi1} that if $\textrm{ch}(\kk)\neq 2$ then 
$H^*(\cM^h(S_g);H^1(S_g;\kk))=0$. For the integral coefficients, Tanaka \cite{Tanaka} showed that 
 $H_1(\cM^h(S_g);H_1(S_g;\zz))\cong\zz_2$. Let us also mention that Morita 
\cite{MoritaJacFou} showed that in the case of the full mapping class group, $H_1(\cM(S_g);H_1(S_g,\zz))\cong\zz_{2g-2}$.
\subsection{Main results}
The purpose of this paper is to extend the notion of the hyperelliptic mapping class group to 
the nonorientable case. We define this group $\cM^h(N)$ in Section \ref{sec:def:hiper} and observe that it contains a 
natural subgroup $\cM^{h+}(N)$ of index 2 (Remark \ref{rem:positiv:subg}).

Then we obtain simple presentations of these groups (Theorems \ref{th:004} and \ref{tw:pres:pos}). By analogy with the orientable case, these presentations may be thought of as the first approximation of a presentation of the full mapping class group $\cM(N)$. In fact, for $g=3$ the hyperelliptic mapping class group $\cM^h(N)$ coincide with the full mapping class group $\cM(N)$ (see Corollary \ref{Presen:g3:Bir}). If $g\geq 4$, then Paris and Szepietowski \cite{SzepParis} obtained a simple presentation of $\cM(N)$, which can be rewritten (Proposition 3.3 and Theorem 3.5 of \cite{StukowSimpSzepPar}) so that it has the hyperelliptic involution $\ro$ as one of the generators, and the hyperelliptic relations (Theorem~\ref{th:004}) appear among defining relations. 

As an application of obtained presentations we compute the first homology groups of $\cM^h(N)$ and $\cM^{h+}(N)$ with 
coefficients in $H_1(N;\zz)$ 
(Theorems \ref{tw:hom:positive} and \ref{tw:hom:full}).
\section{Definitions of $\cM^h(N_g)$ and $\cM^{h+}(N_g)$} \label{sec:def:hiper} %
Let $S_{g-1}$  be a closed oriented surface of genus $g-1\geqslant 2$ embedded in $\rr^3$ as shown in
Figure \ref{r01}, in such a way that it is invariant under reflections across $xy,yz,xz$ planes, and
let $\map{j}{S_{g-1}}{S_{g-1}}$ be the symmetry defined by $j(x,y,z)=(-x,-y,-z)$. Denote by $C_{\cM^{\pm}(S_{g-1})}(j)$
the centraliser of $j$ in $\cM^{\pm}(S_{g-1})$. The orbit space $S_{g-1}/\gen{j}$ is a nonorientable
surface $N_{g}$ of genus $g$ and it is known (Theorem 1 of \cite{BirChil1}) that there is an epimorphism 
\[\map{\pi_j}{C_{\cM^{\pm}(S_{g-1})}(j)}{\cM(N_{g})}\]
with kernel $\ker \pi_j=\gen{j}$. In particular
\[\cM(N_{g})\cong C_{\cM^{\pm}(S_{g-1})}(j)/\gen{j}.\]
Observe that the hyperelliptic involution $\ro$ is an element of $C_{\cM^{\pm}(S_{g-1})}(j)$. Hence the
following definition makes sense.
\begin{df}
 Define the \emph{hyperelliptic mapping class group} $\cM^h(N)$ of a closed nonorientable surface $N$ to be
the centraliser of $\pi_j(\ro)$ in the mapping class group $\cM(N)$. We say that $\pi_j(\ro)$ is the \emph{hyperelliptic
involution} of $N$ and by abuse of notation we write $\ro$ for $\pi_j(\ro)$.
\end{df}
In order to have a little more straightforward description of $\ro$ observe, that the orbit space $S_{g-1}/\gen{j}$ gives the
model of $N_g$, where $N_g$ is a connected sum of an orientable surface $S_r$ and a projective plane (for $g$ odd) or a Klein
bottle (for $g$ even) -- see Figure \ref{r02}. 
\begin{figure}[h]
\begin{center}
\includegraphics[width=0.96\textwidth]{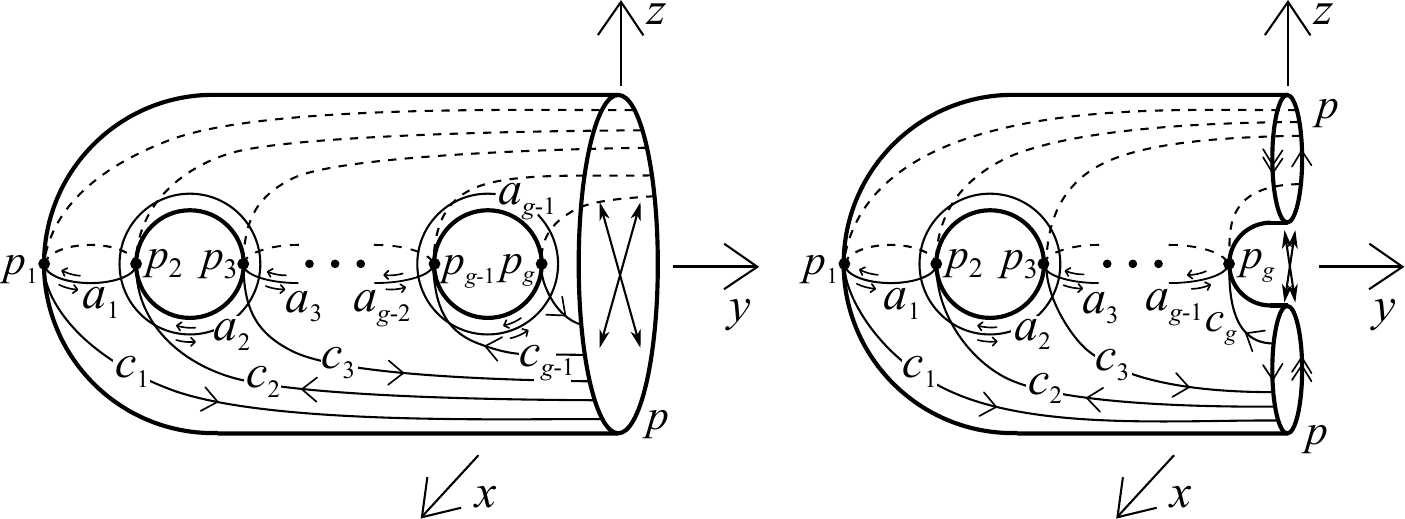}
\caption{Nonorientable surface $N_g$.}\label{r02} %
\end{center}
\end{figure}
To be more precise, $N_g$ is the left half of $S_{g-1}$ embedded in $\rr^3$ as
in Figure \ref{r01} with boundary points identified by the map $(x,y,z)\mapsto (-x,-y,-z)$. Note that $g=2r+1$ for $g$ odd and
$g=2r+2$ for $g$ even. In such a model, $\map{\ro}{N_g}{N_g}$ is the map induced by the half turn about the $y$-axis. 

Observe that the set of fixed points of $\map{\ro}{N_g}{N_g}$ consists of $g$ points $\{p_1,p_2,\ldots,p_g\}$ and the circle
$p$. Therefore $\cM^h(N)$ consists of isotopy classes of maps which must fix the set $\{p_1,p_2,\ldots,p_g\}$ and map the 
circle $p$ to itself.
Moreover, the orbit space $N_g/\gen{\ro}$ is the sphere $S_{0,1}^g$ with one boundary component corresponding to $p$ and
$g$ distinguished points corresponding to $\{p_1,p_2,\ldots,p_g\}$. Since elements of $\cM^h(N_g)$ may not fix
$p$ point--wise, it is more convenient to treat $p$ as the distinguished puncture $p_{g+1}$, hence we will
identify $N_g/\gen{\ro}$ with the sphere $S_0^{g,1}$ with $g+1$ punctures. The notation $S_0^{g,1}$ is meant to indicate
that maps of $S_0^{g,1}$ (and their isotopies) could permute the punctures $p_1,\ldots,p_g$, but must fix $p_{g+1}$. 

 The main goal of this section is to prove the following theorem.
\begin{tw}\label{tw:002}
 If $g\geqslant 3$ then the projection $N_g\to N_g/\gen{\ro}$ induces an epimorphism
\[\map{\pi_\ro}{\cM^h(N_g)}{\cM^{\pm}(S_0^{g,1})}\]
with $\ker \pi_\ro=\gen{\ro}$.
\end{tw}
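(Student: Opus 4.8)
The plan is to mimic the classical Birman--Hilden argument, exactly as in the orientable case and in Theorem 1 of \cite{BirChil1}, but applied to the branched double cover $N_g\to N_g/\gen{\ro}=S_0^{g,1}$. First I would construct the homomorphism $\pi_\ro$. Given $[h]\in\cM^h(N_g)$, choose a representative $h$ that commutes with $\ro$ on the nose (up to isotopy this is possible since $[h]$ centralises $[\ro]$; one uses that two commuting-up-to-isotopy diffeomorphisms of a surface with the relevant fixed-point data can be isotoped to genuinely commute, again a Birman--Hilden type fact). Such an $h$ descends to a diffeomorphism $\bar h$ of the quotient $S_0^{g,1}$ that fixes the puncture $p_{g+1}$ (the image of the circle $p$) and permutes $p_1,\dots,p_g$; the orientation of $S_0^{g,1}$ may or may not be preserved, so $\bar h$ lands in $\cM^{\pm}(S_0^{g,1})$. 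One checks this is well defined on isotopy classes and is a homomorphism.

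Surjectivity is the heart of the matter. Given $[f]\in\cM^{\pm}(S_0^{g,1})$, I would show $f$ lifts to a diffeomorphism of $N_g$ commuting with $\ro$. Since $\ro$ is the deck transformation of a branched cover with branch locus the $g+1$ marked points, a map $f$ of $S_0^{g,1}$ lifts iff it preserves the associated data classifying the cover; here the cover is determined by the homomorphism $\pi_1(S_0^{g,1}\setminus\{\text{punctures}\})\to\zz_2$ sending each small loop around a puncture to $1$, and any diffeomorphism fixing the puncture set (as a set, fixing $p_{g+1}$) automatically preserves this homomorphism up to the obvious action. Hence $f$ lifts; the lift is unique up to composition with $\ro$, and one must check the lift can be chosen to be the identity on $\partial N_g$ (there is no boundary here, $N_g$ is closed, so this is vacuous) and to lie in $\cM^h(N_g)$, i.e. to commute with $\ro$ — which it does, being a lift of the identity-covered dynamics. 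This gives $\pi_\ro\circ(\text{lift})=\text{id}$, so $\pi_\ro$ is onto.

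For the kernel: if $[h]\in\ker\pi_\ro$ then a $\ro$-equivariant representative $h$ descends to a map isotopic to the identity on $S_0^{g,1}$. By the lifting-of-isotopies part of the Birman--Hilden theory, such an isotopy lifts to a $\ro$-equivariant isotopy on $N_g$ connecting $h$ to a deck transformation, i.e. to $\ro$ or the identity. Hence $\ker\pi_\ro\subseteq\gen{\ro}$; since $\ro$ is nontrivial in $\cM(N_g)$ (for instance it acts nontrivially on $H_1(N_g;\zz)$, or one invokes that it has the wrong fixed-point behaviour to be isotopic to the identity for $g\geqslant 3$) we get equality.

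The main obstacle is the equivariant isotopy machinery: upgrading ``$[h]$ commutes with $[\ro]$'' to ``$h$ commutes with $\ro$'', and ``$\bar h\simeq\mathrm{id}$'' to ``$h$ is $\ro$-equivariantly isotopic to a deck transformation''. In the orientable setting these are Birman--Hilden's theorems, valid because the genus is large enough that the covering surface is sufficiently hyperbolic; here I would either cite the nonorientable Birman--Hilden results in the literature or pass to the orientable double cover $S_{g-1}$ and combine $\pi_j$ (from the excerpt, with kernel $\gen{j}$) with the known orientable statement about the centraliser of $\ro$, being careful that $\ro$ and $j$ commute and that $\gen{\ro,j}$ acts as expected, so that $\cM^h(N_g)\cong C(\ro,j)/\gen{j}$ maps onto $C_{\cM^{\pm}(S_0^{\,?})}(\dots)/(\dots)$; the bookkeeping of which quotient of which centraliser yields exactly $\cM^{\pm}(S_0^{g,1})$ with the prescribed puncture behaviour is where the hypothesis $g\geqslant 3$ enters and where care is needed.
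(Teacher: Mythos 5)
Your primary route---running Birman--Hilden directly on the quotient $N_g\to N_g/\gen{\ro}$---has a genuine gap precisely at the three places you lean on ``Birman--Hilden type facts'': replacing a class that centralises $[\ro]$ by a representative commuting with $\ro$, well-definedness of the induced map on isotopy classes, and lifting an isotopy of the quotient to a $\ro$-equivariant isotopy (for the kernel). None of these is covered by the classical theorem, because here the total space is nonorientable and, more seriously, $\mathrm{Fix}(\ro)$ consists of the $g$ points \emph{and} the circle $p$: near $p$ the quotient map is a fold onto a boundary circle of $N_g/\gen{\ro}$, not a two-sheeted branched covering over an interior point. For the same reason your lifting criterion is misstated: the classifying homomorphism $\pi_1\to\zz_2$ sends the loops about $p_1,\dots,p_g$ to $1$, but its value on a loop about $p_{g+1}$ is forced (it equals $g$ bmod $2$), and the extension of a lift over $p$ is governed by the fold model rather than the branch-point model. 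Saying you would ``cite the nonorientable Birman--Hilden results in the literature'' is a placeholder exactly at the crux; no specific statement covering an involution with one-dimensional fixed locus on a nonorientable surface is produced.

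Your fallback sketch is in fact the paper's proof, but you explicitly defer the part that constitutes it. The paper never proves a Birman--Hilden theorem for $(N_g,\ro)$ directly: it defines $\pi_\ro$ as the composition $\pi_j\circ\pi_\ro\circ i_j$, where $i_j$ is the section of the Birman--Chillingworth projection \cite{BirChil1} sending $h\in\cM^h(N_g)$ to its orientation-preserving lift in $C_{\cM^{\pm}(S_{g-1})}(\gen{j,\ro})$, the middle map is the classical orientable Birman--Hilden homomorphism $\cM^{h\pm}(S_{g-1})\to\cM^{\pm}(S_0^{2g})$ with kernel $\gen{\ro}$, and the right-hand map $C_{\cM^{\pm}(S_0^{2g})}(j)\to\cM^{\pm}(S_0^{g,1})$ exists by Zieschang's Proposition 10.3 of \cite{Zies1}, which is exactly the result handling a reflection with a circle of fixed points; the kernel is then computed as $i_j^{-1}(\pi_\ro^{-1}(\pi_j^{-1}(id)))=i_j^{-1}(\gen{j,\ro})=\gen{\ro}$. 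The ``bookkeeping of which quotient of which centraliser yields exactly $\cM^{\pm}(S_0^{g,1})$'' that you leave open is precisely this diagram and this kernel computation, so as written the proposal identifies where the difficulty lies but does not resolve it.
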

\begin{proof}
 Consider the following diagram
\[\xymatrix@C=3pc@R=3pc{
 C_{\cM^{\pm}(S_{g-1})}(\gen{j,\ro})\ar^{\pi_\ro}[r]\ar^{\pi_j}[d]&C_{\cM^{\pm}(S_0^{2g})}(j)\ar^{\pi_j}[d]\\
 \cM^h(N_g)\ar@{-->}^{\pi_\ro}[r] \ar@/^1pc/@{->}[u]^{i_j} &\cM^{\pm}(S_0^{g,1})
}\]
 The left vertical map is the restriction of the projection 
 \[\map{\pi_j}{C_{\cM^{\pm}(S_{g-1})}(j)}{\cM(N_g)}\] to the subgroup consisting of elements which
 centralise $\ro$. The nice thing about $\pi_j$ is that it has a section
 \[\map{i_j}{\cM(N_g)}{C_{\cM^{\pm}(S_{g-1})}(j)}.\]
 In fact, for any $h\in\cM(N_g)$ we can define $i_j(h)$ to be an orientation preserving lift of $h$. 
 
 The upper horizontal map is the restriction of the homomorphism
 \[\map{\pi_\ro}{\cM^{h\pm}(S_{g-1})}{\cM^{\pm}(S_0^{2g})}\]
induced by the orbit projection $S_{g-1}\to S_{g-1}/\gen{\ro}$. The fact that this map is a
homomorphism was
first observed by Birman and Hilden \cite{Bir-Hil}. The kernel of this map is
equal to $\gen{\ro}$.

The right vertical map is again the homomorphism induced by the orbit projection $S^{2g}_0\to
S^{2g}_0/\gen{j}$. However now $\map{j}{S^{2g}_0}{S^{2g}_0}$ is a reflection with a circle of fixed points. The existence of
$\pi_j$ in such a case follows from the work of Zieschang (Proposition 10.3 of \cite{Zies1}).

Hence there is the homomorphism 
\[\map{\pi_\ro}{\cM^h(N_g)}{\cM^{\pm}(S_0^{g,1})}\]
defined as the composition
\[\pi_\ro=\pi_j\circ \pi_\ro\circ i_j.\]
Moreover,
\[\begin{aligned}
\ker \pi_\ro&=\ker (\pi_j\circ \pi_\ro\circ i_j)=(\pi_j\circ \pi_\ro\circ i_j)^{-1}(id)\\
&=i_j^{-1}(\pi_\ro^{-1}(\pi_j^{-1}(id)))=i_j^{-1}(\pi_\ro^{-1}(\gen{j}))=i_j^{-1}(\gen{j,\ro})=\gen{\ro}.
\end{aligned}\]
\end{proof}
\begin{uw}
Theorem \ref{tw:002} is not true if $N=N_2$. This corresponds to the fact that the Birman-Hilden theorem does not hold for the
closed torus $S=S_1$.
\end{uw}
\begin{uw}\label{rem:positiv:subg}
Theorem \ref{tw:002} shows that the group $\cM^h(N_g)$ contains a very natural subgroup of index 2, namely 
\[\cM^{h+}(N_g)=\pi_\ro^{-1}(\cM(S_0^{g,1})).\]
Geometrically, the subgroup $\cM^{h+}(N_g)$ consists of these elements, which preserve the orientation of the circle $p$
(the circle fixed by $\ro$). As we will see later (see Remark \ref{uw:perspective}), it seems that
the group $\cM^{h+}(N)$ corresponds to $\cM^h(S)$, whereas $\cM^{h}(N)$ corresponds to $\cM^{h\pm}(S)$.
\end{uw}
\section{Presentations for groups $\cM(S_0^{g,1})$ and $\cM^{\pm}(S_0^{g,1})$}%
Let $w_1,w_2,\ldots,w_g$ be simple arcs connecting punctures $p_1,\ldots,p_{g+1}$ on a sphere $S_0^{g+1}$ as shown in Figure
\ref{r03}. 
\begin{figure}[h]
\begin{center}
\includegraphics[width=0.8\textwidth]{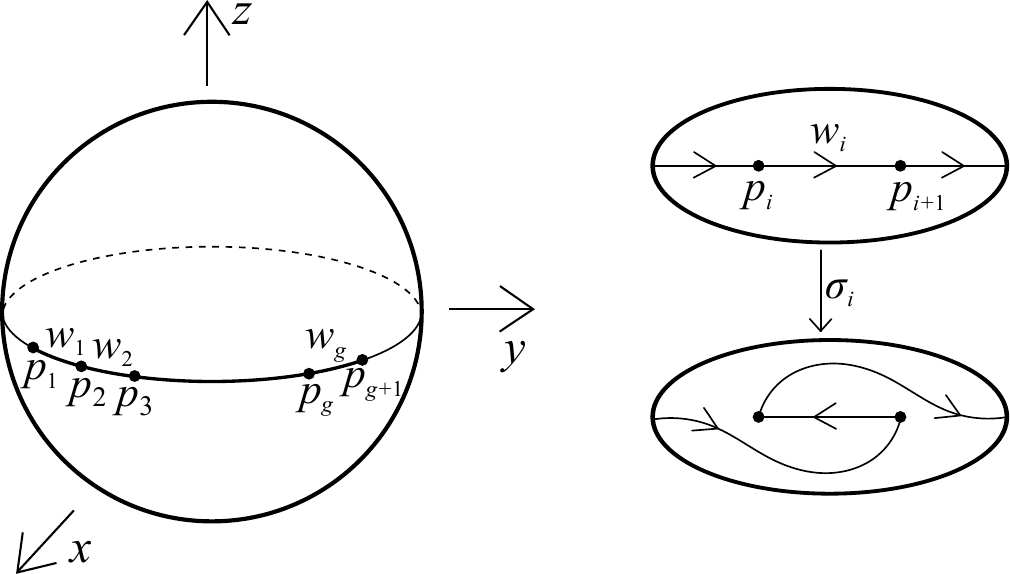}
\caption{Sphere $S_0^{p+1}$ and elementary braid $\sig_i$.}\label{r03} %
\end{center}
\end{figure}
Recall that to each such arc $w_i$ we can associate the elementary braid $\sig_i$ which interchanges punctures $p_i$ and
$p_{i+1}$ -- see Figure \ref{r03}. The following theorem is due to Magnus \cite{Mag_sfer}. It is also proved in Chapter 4 of
\cite{Bir-BLMCG}.
\begin{tw}\label{th:00}
 If $g\geqslant 1$, then $\cM(S_0^{g+1})$ has the presentation with generators $\sig_1,\ldots,\sig_g$ and defining relations:
\[\begin{aligned}
   &\sig_k\sig_j=\sig_j\sig_k\quad\text{for $|k-j|>1$},\\
&\sig_j\sig_{j+1}\sig_j=\sig_{j+1}\sig_j\sig_{j+1}\quad\text{for $j=1,\ldots,g-1$},\\
&\sig_1\cdots \sig_{g-1}\sig_g^2\sig_{g-1}\cdots \sig_1=1,\\
&(\sig_1\sig_2\cdots \sig_{g})^{g+1}=1.
  \end{aligned} 
\]\end{tw}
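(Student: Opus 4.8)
The plan is to prove Theorem~\ref{th:00} by identifying $\cM(S_0^{g+1})$ with a quotient of the classical braid group $B_{g+1}$ and then tracking down exactly which relations must be added. First I would recall that the mapping class group $\cM(S_0^{g+1})$ of the $(g+1)$-punctured sphere (with punctures free to be permuted, and no boundary) is naturally isomorphic to the \emph{spherical braid group} on $g+1$ strands, which is itself a quotient of Artin's braid group $B_{g+1}=\gen{\sig_1,\dotsc,\sig_g}$ subject to the two braid relations (commutation for $|k-j|>1$, and the Yang--Baxter relation for consecutive indices). The point group structure: $\cM(S_0^{g+1})$ sits in the Birman short exact sequence relating the pure mapping class group, the point-pushing subgroup, and the symmetric group $\Sigma_{g+1}$, but for the presentation the cleanest route is to exhibit the surjection $B_{g+1}\twoheadrightarrow\cM(S_0^{g+1})$ geometrically (an elementary braid $\sig_i$ acts as the half-twist swapping $p_i,p_{i+1}$ along the arc $w_i$) and then compute its kernel.

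The next step is to pin down the kernel as the normal closure of two elements: $\sig_1\cdots\sig_{g-1}\sig_g^2\sig_{g-1}\cdots\sig_1$ and $(\sig_1\sig_2\cdots\sig_g)^{g+1}$. Geometrically, the first word is the Dehn twist about a curve enclosing all $g+1$ punctures: on the plane it is a nontrivial full twist around the boundary strand, but on the sphere that boundary curve bounds a disk on the other side, so the twist is isotopically trivial. The second word, $(\sig_1\cdots\sig_g)^{g+1}$, represents the full twist $\Delta^2$ of $B_{g+1}$ (equivalently, a $2\pi$ rotation of the whole punctured disk); on the sphere this rotation extends over the complementary cap and hence is isotopic to the identity. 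So both relations manifestly hold in $\cM(S_0^{g+1})$, giving a surjection from the group $G$ presented as in the statement onto $\cM(S_0^{g+1})$. To see it is an isomorphism I would invoke the known presentation of the spherical braid group (Fadell--Van Buskirk, or the treatment in Chapter~4 of \cite{Bir-BLMCG}, to which the paper already defers), or argue directly that $G$ has the right structure: the abelianisation and the order of the torsion elements match, and more robustly, one checks that the analogous Birman exact sequence $1\to\text{(pure part)}\to G\to\Sigma_{g+1}\to 1$ is respected, with the pure part being generated by the images of the $A_{ij}$ and the added relations cutting it down to exactly the pure spherical braid group.

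Practically, since the paper explicitly attributes the result to Magnus \cite{Mag_sfer} and notes it is also in Chapter~4 of \cite{Bir-BLMCG}, the ``proof'' I would actually write is short: verify the two sphere-specific relations hold (the geometric argument above), observe that these together with the braid relations are exactly Magnus's defining relations for $\cM(S_0^{g+1})$, and cite \cite{Mag_sfer,Bir-BLMCG} for the fact that no further relations are needed. The main obstacle, if one wanted a self-contained argument rather than a citation, is proving completeness of the relation set --- i.e.\ that the kernel of $B_{g+1}\twoheadrightarrow\cM(S_0^{g+1})$ is \emph{normally generated} by just those two words and not something larger. This is the genuinely nontrivial input (it ultimately rests on understanding $\pi_1$ of configuration spaces of the sphere, or on a careful Reidemeister--Schreier computation through the $\Sigma_{g+1}$-quotient), so in the context of this paper I would treat it as a black box supplied by the cited references.
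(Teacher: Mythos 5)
The paper gives no proof of this statement at all—it simply attributes it to Magnus \cite{Mag_sfer} and to Chapter~4 of \cite{Bir-BLMCG}—and your plan likewise amounts to checking geometrically that the two sphere-specific relations hold and citing those same sources for completeness of the relation set, so it is essentially the paper's treatment and is fine. One minor caution: $\cM(S_0^{g+1})$ is not the spherical braid group itself but its quotient by the central full twist $(\sig_1\cdots\sig_g)^{g+1}$ of order two (and the first relator is a point-push of a puncture around all the others rather than a Dehn twist about a curve enclosing all punctures), but since you include both relations and defer completeness to the references, these slips do not affect the argument.
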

In order to avoid unnecessary complications, from now on assume that $g\geqslant 3$.
Recall that we denote by $\cM(S_0^{g,1})$ the subgroup of $\cM(S_0^{g+1})$ consisting of maps which fix $p_{g+1}$. 
\begin{tw}\label{th:001}
 If $g\geqslant 3$, then $\cM(S_0^{g,1})$ has the presentation with generators $\sig_1,\ldots,\sig_{g-1}$ and defining
relations:
\begin{enumerate}
 \item[(A1)] $\sig_k\sig_j=\sig_j\sig_k$ for $|k-j|>1$ and $k,j<g$,
\item[(A2)] $\sig_j\sig_{j+1}\sig_j=\sig_{j+1}\sig_j\sig_{j+1}$ for $j=1,2,\ldots,g-2$,
\item[(A3)] $(\sig_1\cdots \sig_{g-1})^g=1$.
\end{enumerate}
\end{tw}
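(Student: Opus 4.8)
The plan is to derive the presentation of $\cM(S_0^{g,1})$ from the known presentation of $\cM(S_0^{g+1})$ (Theorem \ref{th:00}) by a standard ``forgetting the puncture'' argument, i.e.\ by analysing the Birman exact sequence associated with forgetting (or rather \emph{not} forgetting, since we fix $p_{g+1}$) versus filling in the puncture $p_{g+1}$. Concretely, there is a surjection $\map{f}{\cM(S_0^{g,1})}{\cM(S_0^g)}$ induced by filling in $p_{g+1}$; its kernel is the image of $\pi_1(S_0^g,p_{g+1})$ under the point-pushing map. But $S_0^g$ is a sphere with $g$ punctures, and $\pi_1$ of a $g$-punctured sphere is free of rank $g-1$, so the kernel is (a quotient of) a free group; in fact the point-pushing subgroup here is normally generated by loops around the punctures $p_1,\dots,p_g$, and one checks that in terms of the braid generators these push-maps are expressed by words in $\sig_1,\dots,\sig_{g-1}$. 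So rather than splitting the sequence, I would instead go the other way and use the inclusion $\cM(S_0^{g,1})\hookrightarrow\cM(S_0^{g+1})$ together with the forgetful map $\map{\phi}{\cM(S_0^{g+1})}{\cM(S_0^{g})}$ that fills in $p_{g+1}$: the composite $\cM(S_0^{g,1})\to\cM(S_0^{g+1})\to\cM(S_0^g)$ is exactly $f$, and $\cM(S_0^{g,1})$ is the stabiliser of $p_{g+1}$.

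The cleanest route is a direct Reidemeister--Schreier / Tietze computation: realise $\cM(S_0^{g,1})$ as the kernel of the ``monodromy'' homomorphism $\map{\theta}{\cM(S_0^{g+1})}{\Sigma_{g+1}/\Sigma_g}\cong \zz$-... — more precisely, $\cM(S_0^{g,1})$ is the preimage in $\cM(S_0^{g+1})$ of the subgroup of the symmetric group $\Sigma_{g+1}$ fixing the symbol $g+1$. Thus $\cM(S_0^{g,1})$ has index $g+1$ in $\cM(S_0^{g+1})$, with Schreier transversal $\{1,\sig_g,\sig_{g-1}\sig_g,\dots,\sig_1\cdots\sig_g\}$. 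I would then run Reidemeister--Schreier on Magnus's presentation (Theorem \ref{th:00}) with this transversal: the generators $\sig_1,\dots,\sig_{g-1}$ (which already fix $p_{g+1}$) reappear directly, the remaining Schreier generators get eliminated using the braid and commutation relations, the relation $\sig_1\cdots\sig_{g-1}\sig_g^2\sig_{g-1}\cdots\sig_1=1$ becomes trivial/absorbed, and the relation $(\sig_1\cdots\sig_g)^{g+1}=1$ collapses to $(\sig_1\cdots\sig_{g-1})^g=1$ after the eliminations — this last point is the crux, since $(\sig_1\cdots\sig_g)^{g+1}$ is the full twist and its image in the subgroup is governed by the Euclidean-type identity $(\sig_1\cdots\sig_g)^{g+1}=(\sig_1\cdots\sig_{g-1})^{g}\cdot(\text{conjugates of }\sig_g)$, and the $\sig_g$-contributions cancel against the disk relation.

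In carrying this out I would organise the steps as: (1) identify $\cM(S_0^{g,1})$ as an index-$(g+1)$ subgroup and fix the Schreier transversal; (2) write the rewritten generators, showing that modulo the rewriting rules everything is a word in $\sig_1,\dots,\sig_{g-1}$ — here I use the relation $\sig_g = $ (transversal element)$^{-1}\cdot(\text{transversal element})$ type substitutions, and the lantern-free, genuinely sphere-braid identities; (3) rewrite each of the four families of Magnus relations over the transversal and simplify using (A1)--(A2) (which transfer verbatim) to see that the only surviving non-braid relation is (A3); (4) check conversely that (A1)--(A3) suffice, i.e.\ that the group they present surjects onto $\cM(S_0^{g,1})$ with trivial kernel, e.g.\ by a counting/abelianisation argument or by exhibiting the section back into $\cM(S_0^{g+1})$.

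I expect step (3), and within it the reduction of the total-twist relation to $(\sig_1\cdots\sig_{g-1})^g=1$, to be the main obstacle: it requires the braid-group identity expressing the center of the sphere braid group in terms of the ``sub-braid'' on the first $g$ strands, and one must track carefully how conjugates of $\sig_g$ produced by the Reidemeister--Schreier rewriting interact with the disk relation $\sig_1\cdots\sig_{g-1}\sig_g^2\sig_{g-1}\cdots\sig_1=1$. A cleaner alternative, which I would try first, is to avoid Reidemeister--Schreier altogether: note $S_0^{g,1}$ is a disk with $g$ punctures, so $\cM(S_0^{g,1})$ is the classical braid group quotient $B_g/Z$-... — in fact $\cM(S_0^{g,1})$ is precisely the mapping class group of the $g$-punctured disk, which is the annular/spherical braid group on $g$ strands, and its standard presentation is exactly the braid relations (A1)--(A2) together with the single relation $(\sig_1\cdots\sig_{g-1})^g=1$ coming from the boundary Dehn twist being trivial (the boundary around $p_{g+1}$ bounds a disk containing only $p_{g+1}$, so $T_\partial = (\sig_1\cdots\sig_{g-1})^g = 1$). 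That identification gives (A1)--(A3) immediately, and I would only fall back on the Reidemeister--Schreier computation if a self-contained derivation from Magnus's theorem is wanted.
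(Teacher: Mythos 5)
Your preferred ``cleaner alternative'' is essentially the paper's own proof: the paper invokes Lemma~2.2 of Bigelow--Budney, which says that capping the boundary of the $g$-punctured disk with a once-punctured disk kills exactly the boundary Dehn twist, so $\cM(S_0^{g,1})\cong B_g/\gen{(\sig_1\cdots\sig_{g-1})^g}$, and combined with the standard presentation of $B_g$ this yields (A1)--(A3) at once -- the only loose point in your write-up (that $\cM(S_0^{g,1})$ ``is'' the mapping class group of the punctured disk, and that the kernel of capping is no larger than $\gen{T_\partial}$) is precisely what that cited lemma supplies. Your Reidemeister--Schreier fallback with a prefix-closed transversal of index $g+1$ is also in the paper, as the remark immediately following the theorem.
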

\begin{proof}
 By Lemma 2.2 of \cite{Big-lin}, 
\[\cM(S_0^{g,1})\cong B_{g}/\gen{\Delta^2},\]
where $B_{g}=\cM(S_{0,1}^{g})$ is the braid group on $g$ strands, and
\[\Delta^2=(\sig_1\cdots \sig_{g-1})^g\]
is the generator of the center of $B_g$. Since $B_g$ has the presentation with generators $\sig_1,\ldots,\sig_{g-1}$ and
defining relations (A1), (A2), this completes the proof.
\end{proof}
\begin{uw}
 Theorem \ref{th:001} can be also algebraically deduced from Theorem \ref{th:00}. Since $\cM(S_0^{g,1})$
is a subgroup of index $g+1$ in $\cM(S_0^{g+1})$, for the Schreier transversal we can take 
\[(1,\sig_{g},\sig_{g}\sig_{g-1},\ldots,\sig_{g}\sig_{g-1}\cdots \sig_1).\]
If we now apply Reidemeister-Schreier process, as generators for $\cM(S_0^{g,1})$ we get 
$\sig_1,\ldots,\sig_{g-1}$ and
additionally $\tau_1,\ldots,\tau_g$ where
\[\tau_k=\begin{cases}
          \sig_g\cdots\sig_{k+1}\sig_k^2\sig_{k+1}^{-1}\cdots\sig_g^{-1}&\text{for $k=1,\ldots,g-1$}\\
\sig_g^2&\text{for $k=g$}.
         \end{cases}\]
As defining relations we get
\begin{enumerate}
 \item[] $\sig_k\sig_j=\sig_j\sig_k$ for $|k-j|>1$ and $k,j<g$,
 \item[] $\sig_k\tau_j=\tau_j\sig_k$ for $j\neq k,k+1$,
\item[] $\sig_j\sig_{j+1}\sig_j=\sig_{j+1}\sig_j\sig_{j+1}$ for $j=1,2,\ldots,g-2$,
\item[] $\sig_k\tau_{k+1}\sig_{k}^{-1}=\tau_{k+1}^{-1}\tau_k\tau_{k+1}$ for $k=1,2,\ldots,g-1$,
\item[] $\sig_k\tau_k\sig_{k}^{-1}=\tau_{k+1}$ for $k=1,2,\ldots,g-1$,
\item[] $\tau_1\tau_2\cdots \tau_g=1$,
\item[] $\sig_{g-1}\cdots\sig_2\sig_1\tau_1\sig_1\sig_2\cdots \sig_{g-1}=1$,
\item[] $\left(\sig_{g-1}\sig_{g-2}\cdots\sig_{1}\tau_1\right)^g=1$.
\end{enumerate}
If we now remove generators $\tau_1,\ldots,\tau_g$ from the above presentation, we obtain the presentation given
by Theorem \ref{th:001}. The computations are lengthy, but completely straightforward. 
\end{uw}
Recall that by $\cM^{\pm}(S_0^{g,1})$ we denote the extended mapping class group of the sphere $S_0^{g,1}$, that is the
extension of degree 2 of $\cM(S_0^{g,1})$. Suppose that the sphere $S_0^{g,1}$ is the metric sphere in $\rr^3$ with origin
$(0,0,0)$ and that punctures $p_1,\ldots,p_g$ are contained in the $xy$-plane. Let $\map{\sig}{{S_0^{g,1}}}{{S_0^{g,1}}}$ be
the map induced by the reflection across the $xy$-plane. We have the short exact sequence.
\[\xymatrix{
1\ar[r]&\cM(S_0^{g,1})\ar[r]&\cM^{\pm}(S_0^{g,1})\ar[r]&\gen{\sig}\ar[r]&1.
}\]
 Moreover, $\sig\sig_i\sig^{-1}=\sig_i^{-1}$ for $i=1,\ldots,g-1$. Therefore Theorem \ref{th:001} implies the following.
\begin{tw} \label{th:003}
 If $g\geqslant 3$, then $\cM^{\pm}(S_0^{g,1})$ has the presentation with generators $\sig_1,\ldots,\sig_{g-1},\sig$ and
defining relations:
\begin{enumerate}
 \item[(B1)] $\sig_k\sig_j=\sig_j\sig_k$ for $|k-j|>1$ and $k,j<g$,
\item[(B2)] $\sig_j\sig_{j+1}\sig_j=\sig_{j+1}\sig_j\sig_{j+1}$ for $j=1,2,\ldots,g-2$,
\item[(B3)] $(\sig_1\cdots \sig_{g-1})^g=1$,
\item[(B4)] $\sig^2=1$,
\item[(B5)] $\sig\sig_i\sig=\sig_i^{-1}$ for $i=1,2,\ldots,g-1$.
\end{enumerate}
\end{tw}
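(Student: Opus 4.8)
The plan is to read the presentation off the split short exact sequence displayed just above. The class of the reflection $\sig$ is an involution of $S_0^{g,1}$, hence an involution in $\cM^{\pm}(S_0^{g,1})$, and it reverses orientation, so it is not contained in $\cM(S_0^{g,1})$. Consequently that sequence splits and $\cM^{\pm}(S_0^{g,1})\cong\cM(S_0^{g,1})\rtimes\gen{\sig}$, with $\gen{\sig}\cong\zz_2$ acting by $\sig_i\mapsto\sig\sig_i\sig^{-1}=\sig_i^{-1}$. Feeding the presentation of $\cM(S_0^{g,1})$ from Theorem~\ref{th:001} and the presentation $\gen{\sig\st\sig^2}$ of $\zz_2$ into the standard presentation of a semidirect product produces exactly the generators $\sig_1,\dots,\sig_{g-1},\sig$ together with the relations (B1)--(B3) (those of $\cM(S_0^{g,1})$), (B4) (the relation $\sig^2=1$ of $\zz_2$, which lifts to an honest relation because $\sig$ is a genuine involution), and (B5) (the conjugation relations recording the action).

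I would prefer to spell this out directly rather than invoke the semidirect-product presentation as a black box. Let $G'$ be the group with presentation (B1)--(B5). Since (B1)--(B3) are literally (A1)--(A3), and (B4), (B5) hold in $\cM^{\pm}(S_0^{g,1})$ by the remarks preceding the statement, there is a homomorphism $\map{\psi}{G'}{\cM^{\pm}(S_0^{g,1})}$ sending each generator to the like-named mapping class. It is surjective because $\sig_1,\dots,\sig_{g-1}$ already generate $\cM(S_0^{g,1})$ by Theorem~\ref{th:001}, while $\sig$ maps onto the nontrivial coset of $\cM(S_0^{g,1})$. Let $H\podz G'$ be the subgroup generated by $\sig_1,\dots,\sig_{g-1}$. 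By Theorem~\ref{th:001} the assignment $\sig_i\mapsto\sig_i$ extends to a surjection $\map{\alpha}{\cM(S_0^{g,1})}{H}$; the composite $\psi|_H\circ\alpha$ fixes every $\sig_i$ and is therefore the identity of $\cM(S_0^{g,1})$, so $\alpha$ is an isomorphism and $\psi$ carries $H$ isomorphically onto $\cM(S_0^{g,1})$.

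It remains to check that $\psi$ is injective. Relations (B4) and (B5) show that $\sig$ normalises $H$, so $H$ is normal in $G'$ and $G'/H$ is a quotient of $\gen{\sig\st\sig^2}\cong\zz_2$; thus $[G':H]\le 2$. On the other hand $\psi$ is surjective and sends $H$ into $\cM(S_0^{g,1})$, which has index $2$ in $\cM^{\pm}(S_0^{g,1})$, so $[G':H]\ge 2$ and in fact $H=\psi^{-1}(\cM(S_0^{g,1}))$. Hence $\ker\psi\podz H$, and since $\psi|_H$ is injective we conclude $\ker\psi=1$. Therefore $\psi$ is an isomorphism and (B1)--(B5) is a presentation of $\cM^{\pm}(S_0^{g,1})$.

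There is no serious obstacle: once Theorem~\ref{th:001} is available the argument is purely formal. The only geometric (rather than algebraic) input is the trio of facts already recorded before the statement, each of them easy: $\sig$ is an honest involution of the sphere, so its class squares to the identity and the extension splits (yielding (B4)); $\sig$ is orientation-reversing, so it lies outside $\cM(S_0^{g,1})$, which is precisely what forces $[G':H]=2$ and pins down $\ker\psi$; and conjugation by the reflection inverts each half-twist $\sig_i$ (yielding (B5)). The one point a scrupulous reader might want a sentence on is that $\sig^2$ is trivial \emph{in the mapping class group}, i.e. that no Dehn-twist correction intervenes --- and this holds simply because $\sig^2$ is the identity diffeomorphism. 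Everything else is bookkeeping.
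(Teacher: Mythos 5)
Your proposal is correct and follows essentially the same route as the paper, which simply observes the split short exact sequence $1\to\cM(S_0^{g,1})\to\cM^{\pm}(S_0^{g,1})\to\gen{\sig}\to 1$ together with $\sig\sig_i\sig^{-1}=\sig_i^{-1}$ and deduces the presentation from Theorem~\ref{th:001}. You merely spell out the standard semidirect-product bookkeeping that the paper leaves implicit, and your verification that $\psi$ is an isomorphism is sound.
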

\section{Presentations for groups $\cM^h(N_g)$ and $\cM^{h+}(N_g)$}
By Theorem \ref{tw:002} there is a short exact sequence.
\[\xymatrix{
1\ar[r]&\gen{\ro}\ar[r]&\cM^h(N_g)\ar[r]^{\pi_\ro}&\cM^{\pm}(S_0^{g,1})\ar[r]&1.
}\]
Moreover, it is known that as lifts of braids $\sig_1,\ldots,\sig_{g-1}\in\cM^{\pm}(S_0^{g,1})$ we can take Dehn twists
$t_{a_1},\ldots,t_{a_{g-1}}\in \cM^h(N_g)$ about circles $a_1,\ldots,a_{g-1}$ -- cf Figure \ref{r02} (small arrows in this
picture indicate directions of twists). As a
lift of $\sig$ we take the symmetry $s$ across the $xy$ plane (the second lift of $\sig$ is the symmetry $\ro s$, that is
the symmetry across the $yz$ plane). 

To obtain a presentation for the group $\cM^h(N_g)$ we need to
lift relations (B1)--(B5) of Theorem \ref{th:003}. Each relation of the form \[w(\sig_1,\ldots,\sig_{g-1},\sig)=1\] lifts
either to $w(t_{a_1},\ldots,t_{a_{g-1}},s)=1$ or to $w(t_{a_1},\ldots,t_{a_{g-1}},s)=\ro$. In order to determine which of
these two cases does occur it is enough to check whether the homeomorphism $w(t_{a_1},\ldots,t_{a_{g-1}},s)$ changes the orientation of
the circle $a_1$ or not. This can be easily done and as a result we obtain the following theorem.
\begin{tw} \label{th:004}
 If $g\geqslant 3$, then $\cM^h(N_g)$ has the presentation with generators $t_{a_1},\ldots,t_{a_{g-1}},s,\ro$ and defining
relations:
\begin{enumerate}
 \item[(C1)] $t_{a_k}t_{a_j}=t_{a_j}t_{a_k}$ for $|k-j|>1$ and $k,j<g$,
\item[(C2)] $t_{a_j}t_{a_{j+1}}t_{a_j}=t_{a_{j+1}}t_{a_j}t_{a_{j+1}}$ for $j=1,2,\ldots,g-2$,
\item[(C3)] $(t_{a_{1}}\cdots t_{a_{g-1}})^g=\begin{cases}
                                              1&\text{for $g$ even}\\
\ro&\text{for $g$ odd,}\\\end{cases}$ 
\item[(C4)] $s^2=1$,
\item[(C5)] $st_{a_j}s=t_{a_j}^{-1}$ for $j=1,2,\ldots,g-1$,
\item[(C6)] $\ro^2=1$,
\item[(C7)] $\ro t_{a_j}\ro=t_{a_j}$ for $j=1,2,\ldots,g-1$,
\item[(C8)] $\ro s\ro=s$. 
\end{enumerate}
\end{tw}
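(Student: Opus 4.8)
The plan is to apply the standard machinery for presenting a group extension given presentations of the kernel and quotient. We have the short exact sequence
\[
1\to \gen{\ro}\to \cM^h(N_g)\xrightarrow{\pi_\ro}\cM^{\pm}(S_0^{g,1})\to 1,
\]
with $\gen{\ro}\cong\zz_2$ central (since $\ro$ is the hyperelliptic involution, it commutes with everything in its centraliser, which is all of $\cM^h(N_g)$). The quotient has the finite presentation (B1)--(B5) of Theorem~\ref{th:003}. The general principle is: pick a set-theoretic lift of each generator of the quotient; then $\cM^h(N_g)$ is generated by these lifts together with the generator $\ro$ of the kernel; the defining relations are (i) the relations of the kernel ($\ro^2=1$), (ii) relations saying each lifted generator conjugates $\ro$ correctly (here $\ro$ is central, so these say the lifts commute with $\ro$), and (iii) one relation for each defining relator $w$ of the quotient, of the form $\widetilde w=\ro^{\epsilon(w)}$ where $\widetilde w$ is the word $w$ evaluated on the chosen lifts and $\epsilon(w)\in\{0,1\}$ records the kernel element by which the lift of the relator fails to be trivial.

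First I would fix the lifts: $t_{a_i}$ lifts $\sig_i$ and $s$ lifts $\sig$, as specified in the text. Then relations (C4), (C6), (C7), (C8) are immediate: (C6) is the kernel relation $\ro^2=1$; (C7) and (C8) say the generators $t_{a_j}$ and $s$ commute with the central element $\ro$; and (C4) $s^2=1$ holds because $s$ is an honest involution of $N_g$ (the reflection across the $xy$-plane), so its image under $\pi_\ro$ already satisfies $\sig^2=1$ with the trivial lift — one still has to check $s^2\ne\ro$, but $s$ fixes the circle $a_1$ setwise and restricts to identity-or-reflection on it in a way incompatible with $\ro$, so $s^2=1$ rather than $\ro$. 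The substance is then to determine, for each of the remaining relators (B1), (B2), (B3), (B5), the correct value of $\epsilon$. As the text indicates, the criterion is whether the lifted homeomorphism $w(t_{a_1},\dots,t_{a_{g-1}},s)$ preserves or reverses the orientation of the circle $a_1$: since $\ro$ reverses the orientation of $a_1$ (it acts as the half-turn), the lift equals $\ro$ exactly when $w$ evaluated on the lifts reverses $a_1$, and equals $1$ otherwise.

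For the braid relations (B1) and (B2): the Dehn twists $t_{a_k}, t_{a_j}$ with $|k-j|>1$ are supported in disjoint subsurfaces, and for $|k-j|=1$ the relevant relation is supported in a neighbourhood of $a_k\cup a_j$; in either case the homeomorphism $w(t_{a_i})$ is isotopic to the identity and in particular preserves the orientation of $a_1$, so $\epsilon=0$ and we get (C1), (C2) verbatim. For (B5), $st_{a_j}s=t_{a_j}^{-1}$: here $s$ reverses the orientation of $a_1$ while $t_{a_j}$ does not, so the composite $s t_{a_j} s t_{a_j}$ reverses $a_1$ once and for... — more carefully, $s$ reverses $a_1$, $t_{a_j}$ fixes $a_1$ setwise preserving orientation, so $s t_{a_j} s$ preserves orientation of $a_1$ and equals $t_{a_j}^{-1}$ up to the kernel; checking that it is exactly $t_{a_j}^{-1}$ and not $\ro t_{a_j}^{-1}$ again reduces to the $a_1$-orientation test, which gives $\epsilon=0$, hence (C5). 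The one genuinely interesting computation is (B3): the relator $(\sig_1\cdots\sig_{g-1})^g$ lifts to $(t_{a_1}\cdots t_{a_{g-1}})^g$, and this is the element $\ro^{\epsilon}$ where $\epsilon$ depends on the parity of $g$. This is where the hard part lies: one must see that $(t_{a_1}\cdots t_{a_{g-1}})^g$ is a half-turn-type symmetry whose effect on $a_1$ is orientation-reversing precisely when $g$ is odd. I would argue this by recognising $t_{a_1}\cdots t_{a_{g-1}}$ as (the lift of) a rotation of the punctured sphere, so its $g$-th power is (the lift of) the identity; the lift is then a deck transformation of the double cover $N_g\to N_g/\gen\ro$, i.e. either $1$ or $\ro$; and a direct local picture near the chain $a_1\cup\dots\cup a_{g-1}$ — or equivalently a Euler-characteristic / fixed-point count for the branched cover, using that $\ro$ has $g$ isolated fixed points and one fixed circle — pins down the parity dependence stated in (C3). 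Finally, having produced generators and verified all relations hold, the presentation is complete because any finite presentation of the quotient together with this data yields a presentation of the extension (the Reidemeister–Schreier / Tietze argument for central extensions), so no further relations are needed.
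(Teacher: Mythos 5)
Your proposal follows essentially the same route as the paper: view $\cM^h(N_g)$ as the central $\zz_2$-extension of $\cM^{\pm}(S_0^{g,1})$ given by Theorem \ref{tw:002}, lift the generators to $t_{a_1},\ldots,t_{a_{g-1}},s$, add the kernel and centrality relations (C4), (C6)--(C8), and decide whether each lifted relator of (B1)--(B5) equals $1$ or $\ro$ by its effect on the oriented circle $a_1$. The only points where you are looser than needed are the parity determination in (C3), which you sketch rather than carry out (the paper likewise dismisses it as routine), and the unnecessary worry that $s^2$ might equal $\ro$ --- the chosen lift $s$ is an honest involution of $N_g$, so $s^2=1$ outright; otherwise the approach and conclusions match the paper's.
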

\begin{wn}\label{cr:001}
 If $g\geqslant 3$, then 
\[H_1(\cM^h(N_g))=\begin{cases}
                   \zz_2\oplus\zz_2&\text{for $g$ odd}\\
\zz_2\oplus\zz_2\oplus\zz_2&\text{for $g$ even.}
                  \end{cases}
\]
\end{wn}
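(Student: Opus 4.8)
The plan is to compute $H_1$ directly from the finite presentation in Theorem \ref{th:004} by abelianising. Recall that for a group $G$ with presentation $\langle X \mid R\rangle$, the abelianisation $H_1(G) = G/[G,G]$ has presentation $\langle X \mid R, [x,y]\ (x,y\in X)\rangle$; concretely, $H_1(G) \cong \zz^X / \langle \text{exponent-sum vectors of } R\rangle$, the cokernel of the relation matrix. So first I would pass to additive notation, writing the generators of $\cM^h(N_g)$ as $t_1,\dots,t_{g-1},s,\ro$ (here $t_i$ stands for the class of $t_{a_i}$), and rewrite each defining relation (C1)--(C8) as a linear relation in the free abelian group on these $g+1$ symbols.

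The key step is to see which relations survive abelianisation. Relations (C1), (C2) become trivial ($0=0$), since they only assert that certain words commute or satisfy the braid relation, and in the abelianisation $t_it_{i+1}t_i = t_{i+1}t_it_{i+1}$ both sides equal $2t_i+t_{i+1}$. The braid relation (C2) together with (C1) also forces all $t_i$ to become equal in $H_1$: from $2t_j + t_{j+1} = t_j + 2t_{j+1}$ we get $t_j = t_{j+1}$, so write $t$ for their common image. Then: (C3) reads $(g-1)\cdot g\cdot t = 0$ for $g$ even, and $g(g-1)t = \ro$ for $g$ odd; (C5) reads $2s + t = -t$, i.e. $2t + 2s = 0$, equivalently $s = -t$ since we are in an abelian group — more precisely it gives the relation $2(t+s)=0$; (C4) gives $2s=0$; (C6) gives $2\ro = 0$; (C7) gives $0 = 0$; (C8) gives $0=0$. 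So the surviving relations are generated by: $2s = 0$; $2\ro = 0$; $2(t+s) = 0$ (from (C5), which combined with $2s=0$ is the same as $2t = 0$); and the (C3) relation, which for $g$ odd identifies $\ro$ with $g(g-1)t$ and for $g$ even says $g(g-1)t = 0$ (vacuous given $2t=0$ since $g(g-1)$ is even).

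Now I would read off the cokernel. The ambient group is $\zz t \oplus \zz s \oplus \zz \ro$. For $g$ even: relations are $2t=0$, $2s=0$, $2\ro=0$ (and $g(g-1)t=0$ which is redundant), so $H_1(\cM^h(N_g)) \cong \zz_2 \oplus \zz_2 \oplus \zz_2$. For $g$ odd: we additionally have $\ro = g(g-1)t$; since $g(g-1)$ is even and $2t=0$, this says $\ro = 0$, so $\ro$ is eliminated and $H_1 \cong \zz t \oplus \zz s$ modulo $2t=0, 2s=0$, giving $\zz_2 \oplus \zz_2$. This matches the stated corollary. The main thing to be careful about — the only real "obstacle" — is bookkeeping: making sure that after imposing $t_i = t_{i+1}$ no independent information is lost from (C1)--(C2), and correctly tracking the exponent sum of $s$ in relation (C5) (it is $s \cdot t_j \cdot s$, total $s$-exponent $+2$, $t_j$-exponent $+1$, set equal to $-1$) and the parity of $g(g-1)$ in (C3). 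Everything else is a one-line cokernel computation.
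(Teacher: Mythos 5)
Your proposal is correct and follows essentially the same route as the paper: abelianise the presentation of Theorem \ref{th:004}, use (C2) to identify all $t_{a_i}$, extract $s^2=1$, $\ro^2=1$, $t_{a_1}^2=1$ (the last from (C4) together with (C5)), and note that (C3) kills $\ro$ exactly when $g$ is odd since $g(g-1)$ is even. The paper's proof is just a terser version of the same cokernel computation.
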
 
\begin{proof}
Relation (C2) implies that the abelianization of the group $\cM^h(N_g)$ is an abelian group generated by $t_{a_1},s,\ro$.
Defining relations take form
\[\begin{aligned}
   &t_{a_1}^{(g-1)g}=\begin{cases}
            1&\text{for $g$ even}\\
\ro&\text{for $g$ odd}\\\end{cases}\\
&s^2=1,\quad t_{a_1}^2=1,\quad \ro^2=1.
  \end{aligned}
\]
Hence $H_1(\cM^h(N_g))=\gen{t_{a_1},s}\simeq \zz_2\oplus\zz_2$ for $g$ odd and $H_1(\cM^h(N_g))=\gen{t_{a_1},s,\ro}\simeq
\zz_2\oplus\zz_2\oplus\zz_2$ for $g$ even.
\end{proof}
The main theorem of \cite{Chil} implies that the group $\cM(N_3)$ is generated by $a_1,a_2$ and a crosscap slide which commutes with $\ro$. Hence $\cM^h(N_3)=\cM(N_3)$ and Theorem \ref{th:004} implies the following.
\begin{wn}[Birman-Chillingworth \cite{BirChil1}] \label{Presen:g3:Bir}
 The group $\cM(N_3)$ has the presentation with generators $t_{a_1},t_{a_2},s$ and defining relations:
\begin{enumerate}
 \item[(D1)] $t_{a_1}t_{a_2}t_{a_1}=t_{a_2}t_{a_1}t_{a_2}$,
\item[(D2)] $(t_{a_1}t_{a_2}t_{a_1})^4=1$,
\item[(D3)] $s^2=1$,
\item[(D4)] $st_{a_j}s=t_{a_j}^{-1}$ for $j=1,2$.
\end{enumerate}
\end{wn}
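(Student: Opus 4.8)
The plan is to start from the presentation of $\cM^h(N_3)$ given by Theorem~\ref{th:004} with $g=3$ --- which applies because $\cM^h(N_3)=\cM(N_3)$ by the discussion preceding the statement --- and simplify it by Tietze transformations. For $g=3$ relation (C1) is empty, relation (C2) is exactly (D1), relation (C4) is (D3), and relation (C5) is (D4), while relation (C3) reads $(t_{a_1}t_{a_2})^3=\ro$. So first I would use (C3) to eliminate the generator $\ro$, substituting $\ro=(t_{a_1}t_{a_2})^3$ in the remaining relations: (C6) then becomes $(t_{a_1}t_{a_2})^6=1$, (C7) becomes $(t_{a_1}t_{a_2})^3 t_{a_j}(t_{a_1}t_{a_2})^3=t_{a_j}$ for $j=1,2$, and (C8) becomes $(t_{a_1}t_{a_2})^3 s(t_{a_1}t_{a_2})^3=s$. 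Thus $\cM(N_3)$ is presented by $t_{a_1},t_{a_2},s$ subject to (D1), (D3), (D4) and these three rewritten relations, and the remaining task is to show that, modulo (D1), (D3), (D4), they are together equivalent to the single relation (D2).

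For this I would invoke the elementary identities valid in any group satisfying the braid relation (D1): setting $\Delta:=t_{a_1}t_{a_2}t_{a_1}$ one has $(t_{a_1}t_{a_2})^3=\Delta^2$ and $\Delta t_{a_1}\Delta^{-1}=t_{a_2}$, $\Delta t_{a_2}\Delta^{-1}=t_{a_1}$, so $\Delta^2$ commutes with $t_{a_1}$ and $t_{a_2}$. Hence the rewritten (C6) is literally $\Delta^4=1$, i.e.\ (D2); the rewritten (C7) reads $t_{a_j}\Delta^4=t_{a_j}$, which in the presence of (D1) is again just (D2); and, expanding with (D3) and (D4), one gets $s\Delta s^{-1}=t_{a_1}^{-1}t_{a_2}^{-1}t_{a_1}^{-1}=\Delta^{-1}$, so $s\Delta^2 s^{-1}=\Delta^{-2}$, and the rewritten (C8) becomes $\Delta^2\cdot\Delta^{-2}\cdot s=s$, which holds trivially. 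Replacing the rewritten (C6) by (D2) and discarding the now-redundant rewritten (C7) and (C8) gives precisely the presentation (D1)--(D4).

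All the computations here are routine; the only point that calls for a little care is the bookkeeping --- tracking which of (C1)--(C8) survive the elimination of $\ro$, and checking that the surviving ``hyperelliptic'' relations (C6)--(C8) collapse to the single relation (D2) --- together with recalling the classical fact that $(t_{a_1}t_{a_2})^3=\Delta^2$ generates the centre of the three-strand braid group presented by (D1). I expect this (rather mild) bookkeeping to be the main obstacle.
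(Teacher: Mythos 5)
Your proposal is correct and follows essentially the same route as the paper: specialize Theorem \ref{th:004} to $g=3$, use (C3) together with the braid relation to express $\ro=(t_{a_1}t_{a_2})^3=(t_{a_1}t_{a_2}t_{a_1})^2$, eliminate $\ro$ so that (C6) becomes (D2), and verify that the rewritten (C7) and (C8) are consequences of (D1)--(D4). The only (cosmetic) difference is that you argue via the centrality of $\Delta^2$ and $s\Delta s^{-1}=\Delta^{-1}$, whereas the paper carries out the equivalent conjugation computations explicitly.
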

\begin{proof}
 By Theorem \ref{th:004}, the group $\cM(N_3)$ is generated by $t_{a_1},t_{a_2},\ro,s$ with defining relations:
\begin{enumerate}
\item[(C2)] $t_{a_1}t_{a_{2}}t_{a_1}=t_{a_{2}}t_{a_1}t_{a_{2}}$,
\item[(C3)] $(t_{a_{1}}t_{a_2})^3=\ro$, 
\item[(C4)] $s^2=1$,
\item[(C5)] $st_{a_j}s=t_{a_j}^{-1}$ for $j=1,2$,
\item[(C6)] $\ro^2=1$,
\item[(C7)] $\ro t_{a_j}\ro=t_{a_j}$ for $j=1,2$,
\item[(C8)] $\ro s\ro=s$. 
\end{enumerate}
Using (C2), we can rewrite (C3) in the form
\[\begin{aligned}
 \ro=t_{a_{1}}t_{a_2}t_{a_{1}}(t_{a_2}t_{a_{1}}t_{a_2})=t_{a_{1}}t_{a_2}t_{a_{1}}(t_{a_1}t_{a_{2}}t_{a_1
})=(t_{a_1}t_{a_{2}}t_{a_1})^2.
  \end{aligned}
\]
Hence we can remove $\ro$ from the generating set and then (C6) will transform into (D2). It remains to check that relations
(C7) and (C8) are superfluous. Let start with (C7).
\[\begin{aligned}
t_{a_1}\ro t_{a_1}^{-1}&=t_{a_1}(t_{a_1}t_{a_{2}}t_{a_1})(t_{a_1}t_{a_{2}}t_{a_1})t_{a_1}^{-1}\\
&=t_{a_1}(t_{a_2}t_{a_{1}}t_{a_2})(t_{a_1}t_{a_{2}}t_{a_1})t_{a_1}^{-1}=
(t_{a_1}t_{a_2}t_{a_{1}})(t_{a_1}t_{a_2}t_{a_{1}})=\ro\\
t_{a_2}\ro t_{a_2}^{-1}&=t_{a_2}(t_{a_1}t_{a_{2}}t_{a_1})(t_{a_1}t_{a_{2}}t_{a_1})t_{a_2}^{-1}\\
&=t_{a_2}(t_{a_1}t_{a_{2}}t_{a_1})(t_{a_2}t_{a_{1}}t_{a_2})t_{a_2}^{-1}=
(t_{a_1}t_{a_2}t_{a_{1}})(t_{a_1}t_{a_2}t_{a_{1}})=\ro
  \end{aligned}\]
Now we check (C8).
\[\begin{aligned}  
s\ro s&=
s(t_{a_1}t_{a_{2}}t_{a_1})^2s=(t_{a_1}^{-1}t_{a_{2}}^{-1}t_{a_1}^{-1})^2=(t_{a_1}t_{a_2}t_{a_1})^{-2
} = (t_{a_1}t_{a_2}t_{a_1})^{2}=\ro.
  \end{aligned}\]
\end{proof}
By restricting homomorphism $\map{\pi_\ro}{\cM^{h}(N_g)}{\cM^{\pm}(S_0^{g,1})}$ to the subgroup $\cM^{h+}(N_g)$ we obtain the
exact sequence
\[\xymatrix{
1\ar[r]&\gen{\ro}\ar[r]&\cM^{h+}(N_g)\ar[r]^{\pi_\ro}&\cM(S_0^{g,1})\ar[r]&1.
}\]
Now if we lift the presentation from Theorem \ref{th:001}, we get the following.
\begin{tw}\label{tw:pres:pos}
 If $g\geqslant 3$, then $\cM^{h+}(N_g)$ has the presentation with generators $t_{a_1},\ldots,t_{a_{g-1}},\ro$ and defining
relations:
\begin{enumerate}
 \item[(E1)] $t_{a_k}t_{a_j}=t_{a_j}t_{a_k}$ for $|k-j|>1$ and $k,j<g$,
\item[(E2)] $t_{a_j}t_{a_{j+1}}t_{a_j}=t_{a_{j+1}}t_{a_j}t_{a_{j+1}}$ for $j=1,2,\ldots,g-2$,
\item[(E3)] $(t_{a_{1}}\cdots t_{a_{g-1}})^g=\begin{cases}
                                              1&\text{for $g$ even}\\
\ro&\text{for $g$ odd,}\\\end{cases}$
\item[(E4)] $\ro^2=1$,
\item[(E5)] $\ro t_{a_j}\ro=t_{a_j}$ for $j=1,2,\ldots,g-1$. 
\end{enumerate}
\end{tw}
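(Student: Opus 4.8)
The plan is to apply the standard recipe for a presentation of a group extension to the short exact sequence $1\to\gen{\ro}\to\cM^{h+}(N_g)\to\cM(S_0^{g,1})\to 1$ displayed just above, using the presentation of $\cM(S_0^{g,1})$ from Theorem~\ref{th:001}. First I would record two preliminary facts. The kernel $\gen{\ro}$ is isomorphic to $\zz_2$, and it is central in $\cM^{h+}(N_g)$ --- this is immediate, since $\cM^{h+}(N_g)\podz\cM^h(N_g)$ and $\cM^h(N_g)$ is by definition the centraliser of $\ro$. Moreover, because $\sig_1,\dots,\sig_{g-1}$ generate $\cM(S_0^{g,1})$ (Theorem~\ref{th:001}) and $t_{a_i}$ is a lift of $\sig_i$, the elements $t_{a_1},\dots,t_{a_{g-1}},\ro$ generate $\cM^{h+}(N_g)$.

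With these lifts fixed, the standard lemma on presentations of extensions gives a complete set of defining relations for $\cM^{h+}(N_g)$ consisting of: (i) for each defining relator of $\cM(S_0^{g,1})$, the equation setting the corresponding word in $t_{a_1},\dots,t_{a_{g-1}}$ equal to the element of $\gen{\ro}$ it represents; (ii) the relation $\ro^2=1$; and (iii) for each $i$, the equation setting $t_{a_i}\ro t_{a_i}^{-1}$ equal to the element of $\gen{\ro}$ it represents. Relation (ii) is (E4); and since $t_{a_i}\ro t_{a_i}^{-1}$ is a conjugate of the order-two element $\ro$ inside $\gen{\ro}\cong\zz_2$ (equivalently, since $\ro$ is central), the relations in (iii) simply say that each $t_{a_i}$ commutes with $\ro$, which modulo (E4) is (E5). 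For (i): relation (A1) rewrites to $t_{a_k}t_{a_j}t_{a_k}^{-1}t_{a_j}^{-1}=1$, because Dehn twists about disjoint curves commute; this is (E1). Relation (A2) rewrites to the braid relation (E2), because $a_j$ and $a_{j+1}$ meet transversely in a single point. The one relator whose lift is not forced to be trivial is (A3): the element $(t_{a_1}\cdots t_{a_{g-1}})^g$ represents \emph{some} element of $\gen{\ro}$, and the point is to decide whether it is $1$ or $\ro$.

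I expect this last determination to be the only genuine obstacle. I would settle it exactly as in the proof of Theorem~\ref{th:004}, where the corresponding lift already occurs as relation (C3): one checks whether the homeomorphism $(t_{a_1}\cdots t_{a_{g-1}})^g$ reverses the orientation of the circle $a_1$, finding that this happens precisely when $g$ is odd; this yields (E3). In fact it is cleanest simply to quote Theorem~\ref{th:004}: relations (C1)--(C3) there hold in $\cM^h(N_g)$, hence in $\cM^{h+}(N_g)$, and they project under $\pi_\ro$ to (A1)--(A3), so the lifts in (i) are exactly (E1)--(E3). Thus (E1)--(E5) is the asserted presentation. As an alternative route that avoids re-examining the orientation of $a_1$, one could instead note that $\cM^{h+}(N_g)$ is the kernel of the epimorphism $\cM^h(N_g)\to\zz_2$ sending $s\mapsto 1$ and $t_{a_1},\dots,t_{a_{g-1}},\ro\mapsto 0$, and run the Reidemeister--Schreier process on the presentation of Theorem~\ref{th:004} with transversal $\{1,s\}$; the extra generators $st_{a_i}s^{-1}$, $s\ro s^{-1}$, $s^2$ are removed by Tietze moves using (C5), (C8) and (C4), and the relations coming from the coset of $s$ become consequences of (E1)--(E5), leaving precisely (E1)--(E5).
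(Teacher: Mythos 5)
Your proposal is correct and follows essentially the same route as the paper: the paper also obtains Theorem \ref{tw:pres:pos} by lifting the presentation of $\cM(S_0^{g,1})$ from Theorem \ref{th:001} through the central extension $1\to\gen{\ro}\to\cM^{h+}(N_g)\to\cM(S_0^{g,1})\to 1$, with the only nontrivial point being whether $(t_{a_1}\cdots t_{a_{g-1}})^g$ equals $1$ or $\ro$, settled exactly as for relation (C3) of Theorem \ref{th:004}. Your alternative Reidemeister--Schreier derivation from Theorem \ref{th:004} is a reasonable extra check but is not needed.
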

\begin{wn}\label{cr:002}
 If $g\geqslant 3$, then 
\[H_1(\cM^{h+}(N_g))=\begin{cases}
                   \zz_{2(g-1)g}&\text{for $g$ odd}\\
\zz_{(g-1)g}\oplus\zz_2&\text{for $g$ even.}
                  \end{cases}
\]
\end{wn}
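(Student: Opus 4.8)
The plan is to repeat the computation carried out in the proof of Corollary~\ref{cr:001}, now applied to the presentation of $\cM^{h+}(N_g)$ given by Theorem~\ref{tw:pres:pos}. Passing to the abelianisation and writing the group additively, relations (E1) and (E5) become vacuous, while each braid relation (E2) reduces to $t_{a_j}=t_{a_{j+1}}$ for $j=1,\dots,g-2$. Hence all the twist generators are identified with a single class $t$, and $H_1(\cM^{h+}(N_g))$ is the abelian group on the two generators $t$ and $\ro$, subject to the images of (E3) and (E4).

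Next I would record the two surviving relations. Relation (E4) contributes $2\ro=0$. Relation (E3) contributes $(g-1)g\,t=0$ when $g$ is even and $(g-1)g\,t=\ro$ when $g$ is odd. For $g$ even this is already a presentation of $\zz_{(g-1)g}\oplus\zz_2$, which is the assertion. For $g$ odd I would use the relation $(g-1)g\,t=\ro$ to eliminate the generator $\ro$; then $2\ro=0$ becomes $2(g-1)g\,t=0$, which is now the only relation, so $H_1(\cM^{h+}(N_g))\cong\zz_{2(g-1)g}$.

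There is no real obstacle in the argument; the only point requiring a little care is the odd case, where one must check that after substituting $\ro=(g-1)g\,t$ no multiple $kt$ with $0<k<2(g-1)g$ is forced to vanish, so that the resulting cyclic group genuinely has order $2(g-1)g$. Note that it is precisely the survival of $\ro$ in $\cM^{h+}(N_g)$ — as opposed to its elimination in Corollary~\ref{cr:001}, where the extra relation $2t=0$ coming from the reflection $s$ already kills $\ro=t^{(g-1)g}$ since $(g-1)g$ is even — that is responsible for the large cyclic homology obtained here.
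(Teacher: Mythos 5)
Your argument is correct and is essentially the paper's own proof: abelianise the presentation of Theorem \ref{tw:pres:pos}, use (E2) to identify all twist generators with a single class $t$, and read off $\zz_{(g-1)g}\oplus\zz_2$ from (E3), (E4) for $g$ even, eliminating $\ro=(g-1)g\,t$ for $g$ odd to get $\zz_{2(g-1)g}$. The extra care you mention in the odd case is just the standard Tietze elimination of a generator, so nothing further is needed.
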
 
\begin{proof}
Relation (E2) implies that the abelianization of the group $\cM^{h+}(N_g)$ is an abelian group generated by $t_{a_1},\ro$.
Defining relations take form:
\[\begin{aligned}
   &t_{a_1}^{(g-1)g}=\begin{cases}
            1&\text{for $g$ even}\\
\ro&\text{for $g$ odd,}\\\end{cases}\\
&\ro^2=1.
  \end{aligned}
\]
Hence $H_1(\cM^{h+}(N_g))=\gen{t_{a_1}}\simeq \zz_{2(g-1)g}$ for $g$ odd and
$H_1(\cM^{h+}(N_g))=\gen{t_{a_1},\ro}\simeq
\zz_{(g-1)g}\oplus\zz_2$ for $g$ even.
\end{proof}
\begin{uw}\label{uw:perspective}
 To put Corollaries \ref{cr:001} and \ref{cr:002} into perspective, recall that in the oriented case (Theorem 8 of \cite{Bir-Hil}),
\[\begin{aligned}
   \cM^{h}(S_g)=\langle& t_{a_1},\ldots,t_{a_{2g+1}},\ro\st 
t_{a_k}t_{a_j}=t_{a_j}t_{a_k},\ t_{a_j}t_{a_{j+1}}t_{a_j}=t_{a_{j+1}}t_{a_j}t_{a_{j+1}},\\
 &(t_{a_1}t_{a_2}\cdots t_{a_{2g+1}})^{2g+2}=1,\ \ro=t_{a_1}t_{a_2}\cdots t_{a_{2g+1}}t_{a_{2g+1}}\cdots t_{a_2}t_{a_1},\\ &\ro^2=1, \ro t_{a_1}\ro=t_{a_1}\rangle,\quad
\text{where $j=1,2,\ldots,2g$, $|k-j|>1$}.
  \end{aligned}
\]
The presentation for the group $\cM^{h\pm}(S_g)$ is obtained from the above presentation by adding one generator $s$ and three relations: 
\[s^2=1,\ st_{a_1}s=t_{a_1}^{-1},\ \ro s\ro=s.\]
Consequently,
$H_1(\cM^{h\pm}(S_g))=\zz_2\oplus\zz_2$ and \[H_1(\cM^{h}(S_g))=\begin{cases}
            \zz_{4g+2}&\text{for $g$ even}\\
\zz_{8g+4}&\text{for $g$ odd.}\\\end{cases}\]
This suggests that algebraically the group $\cM^{h+}(N)$ corresponds to $\cM^h(S)$, whereas $\cM^{h}(N)$ corresponds to
$\cM^{h\pm}(S)$.
\end{uw}
\section{Computing $H_1(\cM^{h+}(N_g); H_1(N_g;\zz))$ and $H_1(\cM^{h}(N_g); H_1(N_g;\zz))$}
\subsection{Homology of groups}
Let us briefly review how to compute the first homology of a group with twisted coefficients. Our exposition follows
\cite{Brown,Tanaka}.

For a given group $G$ and $G$-module $M$ (that is $\zz G$-module) we define the \emph{bar resolution} which
is a chain complex $(C_n(G))$ of $G$-modules, where $C_n(G)$ is the free $G$-module generated by symbols $[h_1|\cdots|h_n]$,
$h_i\in G$. For $n=0$, $C_0(G)$ is the free module generated by the empty bracket $[\cdot]$. Our interest will restrict to
groups $C_2(G),C_1(G),C_0(G)$ for which the boundary operator $\map{\partial_n}{C_n(G)}{C_{n-1}(G)}$ is defined by formulas:
\[\begin{aligned}
\partial_2([h_1|h_2])&=h_1[h_2]-[h_1h_2]+[h_1],\\
   \partial_1([h])&=h[\cdot]-[\cdot].
  \end{aligned}
\]

The homology of $G$ with coefficients in $M$ is defined as the homology groups of the chain complex $(C_n(G)\otimes M)$, where
the chain complexes are tensored over $\zz G$.  In particular, $H_1(G;M)$ is the first homology group of the
complex
\[\xymatrix@C=3pc@R=3pc{C_2(G)\otimes M\ar[r]^{\ \partial_2\otimes {\rm id}}&C_1(G)\otimes M
\ar[r]^{\ \partial_1\otimes {\rm id}}&C_0(G)\otimes M.}\]
For simplicity, we denote $\partial\otimes {\rm id}=\kre{\partial}$ henceforth.

If the group $G$ has a presentation $G=\langle X\,|\,R\rangle$, denote by
\[\gen{\kre{X}}=\gen{[x]\otimes m\st x\in X, m\in M}\podz C_1(G)\otimes M.\]
Then, using the formula for $\partial_2$, one can show that $H_1(G;M)$ is a quotient of  $\gen{\kre{X}}\cap \ker\kre{\partial}_1$.

The kernel of this quotient corresponds to relations in $G$
(that is elements of $R$). To be more precise, if
$r\in R$ has the form $x_1\cdots x_k=y_1\cdots y_n$ and $m\in M$, then 
$r$ gives the relation (in $H_1(G;M)$)
\begin{equation}
 \kre{r}\otimes m\!:\ \sum_{i=1}^{k}x_1\cdots x_{i-1}[x_i]\otimes m=\sum_{i=1}^{n}y_1\cdots y_{i-1}[y_i]\otimes m.\label{eq_rew_rel}
\end{equation}
Then 
\[H_1(G;M)=\gen{\kre{X}}\cap \ker\kre{\partial}_1/\gen{\kre{R}},\]
where 
\[\kre{R}=\{\kre{r}\otimes m\st r\in R,m\in M\}.\]
\subsection{Action of $\cM^{h}(N_g)$ on $H_1(N_g;\zz)$}
Let $c_1,\ldots,c_g$ be one-sided circles indicated in Figure \ref{r04}. In this figure surface $N_g$ is represented as
the sphere with $g$ crosscaps (the shaded disks represent crosscaps, hence their interiors are to be
removed and then the antipodal points on each boundary component are to be identified). 
\begin{figure}[h]
\begin{center}
\includegraphics[width=0.8\textwidth]{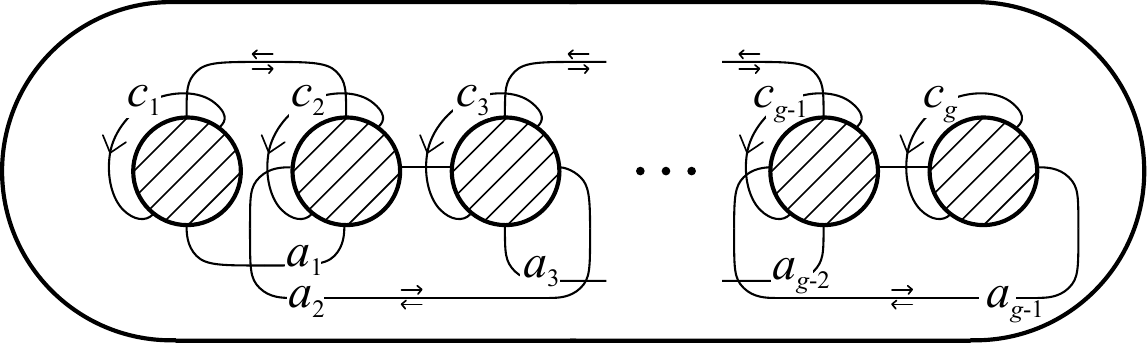}
\caption{Surface $N_g$ as a sphere with crosscaps.}\label{r04} %
\end{center}
\end{figure}
The same set of circles is also indicated in Figure \ref{r02} -- for a method of transferring circles between two
models of
$N_g$ see Section 3 of \cite{Stukow_HomTw}.

Recall that $H_1(N_g;\zz)$ as a $\zz$-module is
generated by $\gamma_1=[c_1],\ldots,\gamma_g=[c_g]$ with respect to the single relation
\[2(\gamma_1+\gamma_2+\cdots+\gamma_g)=0.\]
There is a $\zz_2$-valued intersection paring $\langle\,,\rangle$ on $H_1(N_g;\zz)$ defined as the symmetric bilinear form
(with values in $\zz_2$) satisfying $\langle\gamma_i,\gamma_j\rangle=\delta_{ij}$ for $1\leqslant i,j\leqslant g$. The mapping class group
$\cM(N_g)$ acts on
$H_1(N_g;\zz)$ via automorphisms which preserve $\langle\,,\rangle$, hence there is a representation 
\[\map{\psi}{\cM(N_g)}{{\rm Iso}(H_1(N_g;\zz))}.\]
In fact it is known that this representation is surjective -- see \cite{Pinkal,Gadgil}.

Since we have very simple geometric definitions of $t_{a_i},s,\ro\in \cM^h(N_g)$ it is straightforward to check that
\[\begin{aligned}
   \psi(t_{a_i})&=I_{i-1}\oplus \begin{bmatrix}
0&1\\
-1&2\end{bmatrix}\oplus I_{g-i-1}\\
\psi(t_{a_i}^{-1})&=I_{i-1}\oplus \begin{bmatrix}
2&-1\\
1&0
    \end{bmatrix}\oplus I_{g-i-1}\\
\psi(s)&=\begin{bmatrix}
          -1&2&-2&2&\ldots&(-1)^{g}\cdot 2\\
0&1&-2&2&\ldots&(-1)^{g}\cdot 2\\
0&0&-1&2&\ldots&(-1)^{g}\cdot 2\\
0&0&0&1&\ldots&(-1)^{g}\cdot 2\\
\vdots&\vdots&\vdots&\vdots&\ddots&\vdots\\
0&0&0&0&\ldots& (-1)^{g}\cdot 1
         \end{bmatrix}
\\
\psi(\ro)&=-I_g
  \end{aligned}
\] 
where $I_k$ is the identity matrix of rank $k$.

The above matrices are written with respect to the generating set $(\gamma_1,\gamma_2,\ldots,\gamma_g)$. Note that
$H_1(N_g;\zz)$ is
not free, hence one has to be careful with matrices -- two different matrices may represent the same element.
\subsection{Computing $\gen{\kre{X}}\cap \ker \kre{\partial}_1$} 
Observe that if $G=\cM^h(N_g)$, $M=H_1(N_g;\zz)$ and $h\in G$ then
\[\kre{\partial}_1([h]\otimes \gamma_j)=(h-1)[\cdot]\otimes \gamma_j=[\cdot]\otimes(\psi(h)^{-1}-I_g)\gamma_j.\]
If we identify $C_0(G)\otimes M$ with $M$ by the map $[\cdot]\otimes m\mapsto m$, this formula takes form
\[\kre{\partial}_1([h]\otimes \gamma_j)=(\psi(h)^{-1}-I_g)\gamma_j.\]
Let us denote $[\ro]\otimes \gamma_j, [s]\otimes \gamma_j,
[t_{a_i}]\otimes \gamma_j$ respectively by $\ro_j,s_j$ and $t_{i,j}$.
 Using the above formula, we obtain
\[\begin{aligned}
\kre{\partial}_1(\ro_j)&=-2\gamma_j\\
\kre{\partial}_1(s_j)&=\begin{cases}
-2\sum_{k=1}^{j}\gamma_k &\text{for $j$ odd}\\
-\kre{\partial}_1(s_{j-1}) &\text{for $j$ even}
                       \end{cases}
\\
   \kre{\partial}_1(t_{i,j})&=\begin{cases}
                              \gamma_i+\gamma_{i+1}&\text{for $j=i$}\\
-\gamma_i-\gamma_{i+1}&\text{for $j=i+1$}\\
0&\text{otherwise.}
                             \end{cases}
  \end{aligned}
\] 
\begin{prop}
Let $g\geqslant 3$ and $G=\cM^{h+}(N_g)$ then $\gen{\kre{X}}\cap\ker\kre{\partial}_1$ is the abelian group which admits 
the presentation with generators:
\begin{enumerate}
 \item[(F1)] $t_{i,j}$, where $i=1,\ldots,g-1$ and $j=1,\ldots,i-1,i+2,\ldots,g$
\item[(F2)] $t_{j,j}+t_{j,j+1}$, where $j=1,\ldots,g-1$
\item[(F3)] $2t_{j,j}+\ro_j+\ro_{j+1}$ , where $j=1,\ldots,g-1$
\item[(F4)] $\begin{cases}
 2t_{1,1}+2t_{3,3}+\cdots+2t_{g-2,g-2}-\ro_g&\text{for $g$ odd}\\
2t_{1,1}+2t_{3,3}+\cdots+2t_{g-1,g-1}&\text{for $g$ even}
      \end{cases}$
\end{enumerate}
and relations
\[\begin{aligned}
r_{t_j}\!&:0=2t_{j,1}+\cdots +2(t_{j,j}+t_{j,j+1})+\cdots+2t_{j,g}\quad\text{for $j=1,\ldots,g-1$}\\
r_{\ro}\!&:\begin{cases}
              2(2t_{1,1}+\ro_1+\ro_2)+\cdots+2(2t_{g-2,g-2}+\ro_{g-2}+\ro_{g-1})\\
\qquad\qquad =2(2t_{1,1}+2t_{3,3}+\cdots+2t_{g-2,g-2}-\ro_g)\quad \text{for $g$ odd}\\
              2(2t_{1,1}+\ro_1+\ro_2)+\cdots+2(2t_{g-1,g-1}+\ro_{g-1}+\ro_{g})\\
\qquad\qquad\qquad =2(2t_{1,1}+2t_{3,3}+\cdots+2t_{g-1,g-1})\quad \text{for $g$ even.}
             \end{cases}
\end{aligned}
\]
\end{prop}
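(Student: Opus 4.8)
The plan is to exploit that $\gen{\kre{X}}$ is an abelian group with a completely transparent presentation, and that the restriction of $\kre{\partial}_1$ to it is the explicit homomorphism already computed above; the proof then becomes linear algebra over $\zz$, the only real subtlety being the single relation imposed by the non--freeness of $M=H_1(N_g;\zz)$.

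\emph{Setting up the ambient group.} First I would note that, $C_1(G)$ being the free $\zz G$--module on the symbols $[h]$, the identification $[\cdot]\otimes m\mapsto m$ presents $C_1(G)\otimes M$ as $\bigoplus_{h\in G}M$, with $[h]\otimes m$ lying in the $h$--th summand. Hence $\gen{\kre{X}}=\bigoplus_x M$, one copy of $M$ for each of the $g$ generators $t_{a_1},\dots,t_{a_{g-1}},\ro$, and feeding in the presentation of $M$ (generators $\gamma_1,\dots,\gamma_g$, single relation $2(\gamma_1+\dots+\gamma_g)=0$) exhibits $\gen{\kre{X}}$ as the abelian group on the generators $t_{i,j}$ ($1\leqslant i\leqslant g-1$, $1\leqslant j\leqslant g$) and $\ro_j$ ($1\leqslant j\leqslant g$), subject precisely to the relations $r_{t_i}\colon 2\sum_j t_{i,j}=0$ ($i=1,\dots,g-1$) and $r_\ro\colon 2\sum_j\ro_j=0$. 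Under this description $\kre{\partial}_1$ is the homomorphism $\gen{\kre{X}}\to M$ given by the displayed formulas for $\kre{\partial}_1(t_{i,j})$ and $\kre{\partial}_1(\ro_j)$. A direct substitution then shows that (F1)--(F4) all lie in $K:=\gen{\kre{X}}\cap\ker\kre{\partial}_1$; for (F4) one uses the relation $2\sum_j\gamma_j=0$ in $M$.

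\emph{Showing (F1)--(F4) generate $K$.} Given $\alpha\in K$, write $\alpha=\sum_{i,j}a_{i,j}t_{i,j}+\sum_j b_j\ro_j$. Subtracting a $\zz$--combination of the generators (F1) I may assume $a_{i,j}=0$ whenever $j\neq i,i+1$; then, using $a_{i,i}t_{i,i}+a_{i,i+1}t_{i,i+1}=(a_{i,i}-a_{i,i+1})t_{i,i}+a_{i,i+1}(t_{i,i}+t_{i,i+1})$ and subtracting multiples of the generators (F2), I reduce $\alpha$ to the form $\sum_i c_i t_{i,i}+\sum_j b_j\ro_j$ with $c_i=a_{i,i}-a_{i,i+1}$. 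Now $\kre{\partial}_1$ of this element (which is still in $K$) lifts to $(c_1-2b_1,\,c_1+c_2-2b_2,\,\dots,\,c_{g-1}-2b_g)\in\zz^g$, and since $M=\zz^g/\zz(2,\dots,2)$ this vector must be a multiple of $(2,\dots,2)$; an immediate induction forces every $c_i$ to be even. Subtracting suitable multiples of the generators (F3) (each of the form $2t_{i,i}+\ro_i+\ro_{i+1}$) now kills all the $t_{i,i}$ terms, leaving an element $\sum_j e_j\ro_j$ of $K$; applying $\kre{\partial}_1$ once more forces all $e_j$ to coincide, so modulo $r_\ro$ this element is either $0$ or $\sum_j\ro_j$. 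Finally, $\sum_j\ro_j$ equals the explicit combination of the (F3)--generators and (F4) obtained by cancelling the diagonal $t$--terms, so it, and hence $\alpha$, lies in the subgroup generated by (F1)--(F4).

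\emph{Identifying the relations.} I would regard (F1)--(F4) as elements of the free abelian group $F$ on the symbols $t_{i,j},\ro_j$, with $p\colon F\to\gen{\kre{X}}$ the quotient map so that $\ker p=\gen{R_0}$, where $R_0=\{r_{t_1},\dots,r_{t_{g-1}},r_\ro\}$. A quick check shows (F1)--(F4) are $\zz$--linearly independent in $F$: the basis vector $t_{j,j+1}$ occurs only in $(F2)_j$, the vectors $\ro_1,\dots,\ro_{g-1}$ occur triangularly in $(F3)_1,\dots,(F3)_{g-1}$, and $\ro_g$ is then pinned by (F4). So the subgroup $\Psi\podz F$ they generate is free abelian on (F1)--(F4). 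Moreover $\gen{R_0}\podz\Psi$, because $r_{t_j}$ is $2$ times a sum of (F1)--generators plus $2\cdot(F2)_j$, and $r_\ro$ is $2$ times the combination of (F3)--generators and (F4) found above. Hence $p$ restricts to a surjection $\Psi\to K$ (by the previous step) with kernel $\Psi\cap\gen{R_0}=\gen{R_0}$, so $K\cong\Psi/\gen{R_0}$; expressing each relator in $R_0$ through (F1)--(F4) yields exactly the relations $r_{t_j}$ and $r_\ro$ of the statement.

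I expect the main obstacle to be the reduction in the second step: one has to carry along the multiple of $(2,\dots,2)$ that the non--freeness of $M$ injects into each value of $\kre{\partial}_1$, extract the evenness of the $c_i$ from it, and then notice that the leftover multiple of $\sum_j\ro_j$ is already absorbed by (F3) and (F4). By comparison the independence and rewriting bookkeeping in the last step is routine.
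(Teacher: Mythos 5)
Your argument is correct and is essentially the paper's own: both proofs reduce an element of $\gen{\kre{X}}\cap\ker\kre{\partial}_1$ successively against the generators of types (F1), (F2) and (F3), use the explicit formulas for $\kre{\partial}_1$ together with the relation $2(\gamma_1+\cdots+\gamma_g)=0$ in $M$ to force the residual coefficients into the pattern of (F4), and observe that the only relations available in $\gen{\kre{X}}$ are the images of $2\sum_j t_{i,j}$ and $2\sum_j\ro_j$, which rewrite precisely as $r_{t_j}$ and $r_\ro$; your free-subgroup formulation $K\cong\Psi/\gen{R_0}$ is just a more formal packaging of the paper's uniqueness-of-normal-form argument. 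One cosmetic slip: for $g$ even the element (F4) contains no $\ro_g$, so in your independence check the coefficient of (F4) is pinned by the basis vector $t_{1,1}$ (after the type-(F2) and type-(F3) coefficients attached to $j=1$ have already been forced to vanish using $t_{1,2}$ and $\ro_1$) rather than by $\ro_g$; the independence itself of course still holds.
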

\begin{proof}
By Theorem \ref{tw:pres:pos}, $\gen{\kre{X}}$ is generated by $t_{i,j}$ and $\ro_j$. Using formulas for 
$\kre{\partial}_1(t_{i,j})$ and $\kre{\partial}_1(\ro_j)$ it is straightforward to check that elements (F1)--(F4) are elements
of $\ker\kre{\partial}_1$. Moreover,
\[2t_{j,1}+2t_{j,2}+\cdots+2t_{j,g}=[t_{a_j}]\otimes 2(\gamma_1+\cdots +\gamma_g)=0,\]
hence $r_{t_j}$ is indeed a relation. Similarly we check that $r_{\ro}$ is a relation.

Observe that using relations $r_{t_j}$ and $r_{\ro}$ we can substitute for $2t_{j,g}$ and $2\ro_1$ respectively, hence
each element in $\gen{\kre{X}}$ can be written as a linear combination
of $t_{i,j},\ro_j$, where each of $t_{1,g},t_{2,g},\ldots,t_{g-1,g},\ro_1$ has the coefficient 0 or 1. Moreover, 
for a given $x\in \gen{\kre{X}}\subset C_1(G)\otimes H_1(N_g;\zz)$ such a combination is unique. Hence for the rest of the proof we assume that
linear combinations of $t_{i,j},\ro_j$ satisfy this condition.

Suppose that $h\in\gen{\kre{X}}\cap \ker\kre{\partial}_1$.
We will show that 
$h$ can be uniquely expressed as a linear combination of generators (F1)--(F4). 

First observe that $h=h_1+h_2$, where
$h_1$ is a combination of generators (F1)--(F2), and $h_2$ does not contain generators of type (F1) nor elements 
$t_{j,j+1}$. Moreover, $h_1$ and $h_2$ are uniquely determined by $h$. 

Next we decompose $h_2=h_3+h_4$, where $h_3$ is a combination of generators (F3) and $h_4$ does not contain 
$\ro_j$ for $j<g$. As before, $h_3$ and $h_4$ are uniquely determined by $h_2$. 

Element $h_4$ has the form 
\[h_4=\sum_{j=1}^{g-1} \alpha_jt_{j,j}+\alpha \ro_g,\]
for some integers $\alpha,\alpha_1,\ldots,\alpha_{g-1}$. Hence
\[  0=\kre{\partial}_1(h_4)=\alpha_1\gamma_1+(\alpha_1+\alpha_2)\gamma_2+\cdots+(\alpha_{g-2}+
\alpha_{g-1})\gamma_{g-1}+(\alpha_{g-1}-2\alpha)\gamma_g. \]
If $g$ is odd this implies that 
\[\alpha_1=\alpha_3=\cdots=\alpha_{g-2}=2k,\ \alpha_2=\alpha_4=\cdots=\alpha_{g-1}=0,\ \alpha=-k,\]
for some $k\in\zz$. For $g$ even we get 
\[\alpha_1=\alpha_3=\cdots=\alpha_{g-1}=2k,\ \alpha=\alpha_2=\alpha_4=\cdots=\alpha_{g-2}=0.\]
In each of these cases $h_4$ is a multiple of the generator (F4).
\end{proof}
By an analogous argument we get
\begin{prop}\label{gen_ker_roz}
Let $g\geqslant 3$ and $G=\cM^h(N_g)$ then $\gen{\kre{X}}\cap\ker\kre{\partial}_1$ is the abelian group 
which admits the presentation with generators: (F1)--(F4), 
\begin{enumerate}
 \item[(F5)] $s_j+s_{j-1}$, where $j$ is even,
\item[(F6)] $s_j-\ro_1-\ro_2-\cdots-\ro_j$, where $j$ is odd. 
\end{enumerate}
The defining relations are $r_{t_j}$, $r_{\ro}$ and
\[r_{s}\!:\begin{cases}
0=2(s_2+s_1)+2(s_4+s_3)+\cdots+2(s_{g-1}+s_{g-2})\\
\qquad\qquad\qquad\qquad\  +2(s_{g}-\ro_1-\ro_2-\cdots-\ro_g)\quad\text{for $g$ odd}\\
            0=2(s_2+s_1)+2(s_4+s_3)+\cdots+2(s_g+s_{g-1})\quad\text{for $g$ even.}
           \end{cases}
\]
\end{prop}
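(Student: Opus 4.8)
The plan is to mimic, essentially verbatim, the proof of the previous proposition, replacing the role of the relation $r_\ro$ and generator (F4) by a parallel analysis for the reflection $s$. By Theorem \ref{th:004}, $\gen{\kre{X}}$ for $G=\cM^h(N_g)$ is generated by the elements $t_{i,j}$, $\ro_j$ and $s_j$; comparing with the previous proposition, the only new generators are the $s_j$. So I first record $\kre{\partial}_1(s_j)$ from the table preceding the previous proposition, and check directly that the proposed elements (F5) and (F6) lie in $\ker\kre{\partial}_1$: for $j$ even, $\kre{\partial}_1(s_j+s_{j-1}) = -\kre{\partial}_1(s_{j-1}) + \kre{\partial}_1(s_{j-1}) = 0$, and for $j$ odd, $\kre{\partial}_1(s_j-\ro_1-\cdots-\ro_j) = -2\sum_{k=1}^{j}\gamma_k - \sum_{k=1}^{j}(-2\gamma_k) = 0$. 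Next I verify that $r_s$ is a genuine relation: in each case the left-hand side equals $[s]\otimes 2(\gamma_1+\cdots+\gamma_g)$ (this needs the explicit action $\psi(s)$, together with $\kre{\partial}_1(\ro_j)=-2\gamma_j$ to fold in the $\ro$-terms in the odd case), and since $2(\gamma_1+\cdots+\gamma_g)=0$ in $H_1(N_g;\zz)$ this vanishes. One should double-check that no relation analogous to "$r_s$ lets us substitute for $2\ro_1$" conflicts with the normal form already fixed in the previous proof; I expect $r_s$ only constrains the even combinations $s_j+s_{j-1}$ and the odd combinations $s_j - \ro_1-\cdots-\ro_j$, so it is compatible with keeping $t_{1,g},\ldots,t_{g-1,g},\ro_1$ with coefficient $0$ or $1$.

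For the main argument I again fix the normal form from the previous proof and take an arbitrary $h\in\gen{\kre{X}}\cap\ker\kre{\partial}_1$, now written as a linear combination of $t_{i,j}$, $\ro_j$ and $s_j$. I peel off summands in the same layered fashion: write $h=h_1+\cdots$ where $h_1$ absorbs all generators of type (F1) and all $t_{j,j+1}$ into combinations (F1)--(F2); then absorb the $s_j$ for $j$ even into combinations (F5); then absorb the $s_j$ for $j$ odd into combinations (F6); then absorb the $\ro_j$ for $j<g$ into combinations (F3); and each time the subtracted combination is uniquely determined because the leading generator ($t_{i,j}$ with $j\notin\{i,i+1\}$, or $t_{j,j+1}$, or $s_j$ with $j$ even, or $s_j$ with $j$ odd, or $\ro_j$ with $j<g$) appears in exactly one generator type and nowhere else in the remaining layers. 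The key point, as in the previous proof, is that after all this peeling the remainder lies in the subspace spanned by $t_{j,j}$ ($j=1,\ldots,g-1$) and $\ro_g$ alone, and on that subspace $\kre{\partial}_1$ is exactly the linear map analysed at the end of the previous proof, whose kernel is generated by (F4).

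The one genuinely new thing to check carefully is that the $s_j$-part really is exhausted by (F5) and (F6): i.e. that once I have subtracted off suitable multiples of (F5) for each even $j$ and of (F6) for each odd $j$, the element contains no $s_k$ at all. This is immediate since (F5) with index $j$ is the unique generator involving $s_j$ for $j$ even (and no other even $s_k$), and (F6) with index $j$ is the unique generator involving $s_j$ for $j$ odd (and no other $s_k$); so the coefficients of the $s_j$ in $h$ are matched one at a time, and the $s_j$ disappear without disturbing the already-fixed normal form on the $t_{i,j}$ and $\ro_j$. After that the argument is word-for-word the previous one. The main obstacle, such as it is, is purely bookkeeping: making sure the order of peeling is chosen so that each layer's leading generators do not reappear in later layers (so uniqueness of the decomposition is preserved), and double-checking the sign/coefficient in verifying that $r_s$ equals $[s]\otimes 2(\gamma_1+\cdots+\gamma_g)$ using the explicit matrix $\psi(s)$ — the alternating $(-1)^g\cdot 2$ entries make that computation the one place where a sign error could creep in. No new idea beyond the previous proposition is needed; hence the paper's phrase "by an analogous argument."
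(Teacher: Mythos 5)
Your proposal is correct and follows exactly the route the paper intends by ``an analogous argument'': check that (F5)--(F6) lie in $\ker\kre{\partial}_1$ using the tabulated $\kre{\partial}_1(s_j)$ and $\kre{\partial}_1(\ro_j)$, note that $r_s$ holds simply because $[s]\otimes 2(\gamma_1+\cdots+\gamma_g)=0$ (and $[\ro]\otimes 2(\gamma_1+\cdots+\gamma_g)=0$ in the odd case), and peel off the $s_j$-layers before reducing to the final linear-algebra step of the previous proposition. The one small refinement: just as $r_\ro$ is used there to normalize the coefficient of $\ro_1$ to $\{0,1\}$, you should use $r_s$ to normalize the coefficient of one $s_k$ (say $s_g$), so that the $s_j$-coefficients you ``match one at a time'' are genuinely well defined; with that, the uniqueness bookkeeping is word-for-word the paper's.
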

\subsection{Rewriting relations}
Using formula \eqref{eq_rew_rel} we rewrite relations (E1)--(E5) as relations in $H_1(\cM^{h+}(N_g); H_1(N_g;\zz))$.

Relation (E1) is symmetric with respect to $k$ and $j$, hence we can assume that $j+1<k$. This relation gives 
\[\begin{aligned}r^{(E1)}_{k,j:i}\!:0&=([t_{a_k}]+t_{a_k}[t_{a_j}]-[t_{a_j}]-t_{a_j}[t_{a_k}])\otimes \gamma_i\\
&=t_{k,i}+[t_{a_j}]\otimes \psi(t_{a_k}^{-1})\gamma_i-t_{j,i}-[t_{a_k}]\otimes \psi(t_{a_j}^{-1})\gamma_i\\
&=\pm\begin{cases}
   0&\text{if $i\neq k,k+1,j,j+1$}\\
t_{j,k}+t_{j,k+1}&\text{if $i=k$ or $i=k+1$}\\
t_{k,j}+t_{k,j+1}&\text{if $i=j$ or $i=j+1$.}
  \end{cases}
  \end{aligned}
\]
Relation (E2) gives
\[\begin{aligned}r^{(E2)}_{j:i}\!:0=&
([t_{a_j}]+t_{a_{j}}[t_{a_{j+1}}]+t_{a_{j}}t_{a_{j+1}}[t_{a_{j}}]\\
&-[t_{a_{j+1}}]-t_{a_{j+1}}[t_{a_{j}}]-t_{a_{j+1}}t_{a_{j}}[t_{a_{j+1}}])\otimes \gamma_i\\
=&\begin{cases}
                 t_{j,i}-t_{j+1,i}&\text{\hspace{-1.5cm}if $i\neq j,j+1,j+2$}\\
 t_{j,j+2}-t_{j+1,j}&\text{\hspace{-1.5cm}if $i=j+2$}\\
(*)+2(t_{j,j}+t_{j,j+1})&\text{\hspace{-1.5cm}if $i=j$}\\
(*)-(t_{j,j}+t_{j,j+1})-(t_{j+1,j+1}+t_{j+1,j+2})&\text{if $i=j+1$.}
                 \end{cases}
\end{aligned}\]
In the above formula $(*)$ denotes some expression homologous to 0 by previously obtained relations. Carefully 
checking relations $r^{(E1)}_{k,j:i}$ and $r^{(E2)}_{j:i}$ we conclude that generators (F1) generate a cyclic group,
and generators (F2) generate a cyclic group of order at most 2.

We next turn to the relation (E5). It gives
\[\begin{aligned}r^{(E5)}_{j:i}\!:0=&([\ro]+\ro[t_{a_j}]-[t_{a_j}]-t_{a_j}[\ro])\otimes \gamma_i\\
   &=\begin{cases}
        -2t_{j,i}&\text{if $i\neq j,j+1$}\\
 -2t_{j,j}-\ro_j-\ro_{j+1}&\text{if $i=j$}\\
(\ro_j+\ro_{j+1}+2t_{j,j})-2(t_{j,j}+t_{j,j+1})&\text{if $i=j+1$.}
       \end{cases}
\end{aligned}\]
These relations imply that generators (F3) are homologically trivial, and generators (F1) generate at most $\zz_2$.

We now turn to the most difficult relation, namely (E3). This relation gives
\[\begin{aligned}
r^{(E3)}_{i}\!:0&=\sum_{k=0}^{g-1}\sum_{n=1}^{g-1}(t_{a_1}\cdots t_{a_{g-1}})^kt_{a_1}\cdots t_{a_{n-1}}[t_{a_n}]\otimes \gamma_i-\eps\ro_i\\
&=\sum_{n=1}^{g-1}[t_{a_n}]\otimes \psi(t_{a_1}\cdots t_{a_{n-1}})^{-1} \sum_{k=0}^{g-1}\psi(t_{a_1}\cdots t_{a_{g-1}})^{-k}\gamma_i-\eps\ro_i\\
&=\sum_{n=1}^{g-1}[t_{a_n}]\otimes Y_{n} \sum_{k=0}^{g-1}Y_{g}^k\gamma_i-\eps\ro_i.
\end{aligned}
\]
Where $\eps=0$ for $g$ even, $\eps=1$ for $g$ odd, and $Y_n=\psi(t_{a_1}\cdots t_{a_{n-1}})^{-1}$. Using the matrix 
formula for $\psi(t_{a_i}^{-1})$, we obtain
\[Y_n\gamma_i=\begin{cases}
               -\gamma_{i-1}&\text{if $2\leq i\leq n$}\\
\gamma_{i}&\text{if $i> n$}\\
2\gamma_1+\cdots+2\gamma_{n-1}+\gamma_n&\text{if $i=1$.}
              \end{cases}
\]
In particular
\[Y_g^k\gamma_i=(-1)^k\gamma_{i-k},\]
where we subtract indexes modulo $g$. Therefore we have
\[r^{(E3)}_{i}\!:0=\sum_{n=1}^{g-1}[t_{a_n}]\otimes Y_{n}\sum_{k=0}^{g-1}(-1)^k\gamma_{i-k}-\eps\ro_i.\]
In order to simplify computations we replace relations: \[r_1^{(E3)},\ r_2^{(E3)},\ldots,\ r_g^{(E3)}\] with relations:
\[r_1^{(E3)}+r_2^{(E3)},\ r_2^{(E3)}+r_3^{(E3)},\ldots,r_{g-1}^{(E3)}+r_g^{(E3)},\ r_g^{(E3)}.\]
Let us begin with $r_g^{(E3)}$.
\[\begin{aligned}
r_g^{(E3)}\!:0=&\sum_{n=1}^{g-1}[t_{a_n}]\otimes Y_{n}\sum_{k=0}^{g-1}(-1)^k\gamma_{g-k}-\eps\ro_{g}\\
=&\sum_{n=1}^{g-1}[t_{a_n}]\otimes \left(\sum_{k=0}^{g-n-1}(-1)^k\gamma_{g-k}+\sum_{k=g-n}^{g-2}(-1)^{k+1}\gamma_{g-k-1}\right.\\
&\qquad +
 (-1)^{g-1}(2\gamma_1+\cdots+2\gamma_{n-1}+\gamma_n)\bigg)-\eps\ro_{g}.
  \end{aligned}
\]
Since all generators of type (F1) are homologous to a single generator, say $t$, and $2t=0$, the above relation can 
be rewritten as
\[r_g^{(E3)}\!:0=
(g-1)(g-2)t+\sum_{n=1}^{g-1}[t_{a_n}]\otimes \left((-1)^{g-n-1}\gamma_{n+1}+(-1)^{g-1}\gamma_n\right)-\eps\ro_{g}.
\]
If $g$ is even, this gives the relation
\[\begin{aligned}
r_g^{(E3)}\!:0=&(-t_{1,1}+ t_{1,2})+(-t_{2,2}-t_{2,3})+\cdots+(-t_{g-1,g-1}+t_{g-1,g})\\
=&(t_{1,1}+ t_{1,2})-(t_{2,2}+t_{2,3})+\cdots+(t_{g-1,g-1}+t_{g-1,g})\\
&-2(t_{1,1}+t_{3,3}+\cdots+t_{g-1,g-1}).
\end{aligned}\]
If $g$ is odd, we have
\[\begin{aligned}
r_g^{(E3)}\!:0=&(t_{1,1}- t_{1,2})+(t_{2,2}+t_{2,3})+\cdots+(t_{g-1,g-1}+t_{g-1,g})-\ro_g\\
=&-(t_{1,1}+ t_{1,2})+(t_{2,2}+t_{2,3})-\cdots+(t_{g-1,g-1}+t_{g-1,g})\\
&+2(t_{1,1}+t_{3,3}+\cdots+t_{g-2,g-2})-\ro_g.
\end{aligned}\]
In both cases relation $r_g^{(E3)}$ implies that generator (F4) is superfluous.

Now we concentrate on the relation $r_i^{(E3)}+r_{i+1}^{(E3)}$.
\[\begin{aligned}
   r_i^{(E3)}+r_{i+1}^{(E3)}&\!:0=\sum_{n=1}^{g-1}[t_{a_n}]\otimes Y_{n}
\sum_{k=0}^{g-1}(-1)^k\left(\gamma_{i-k}+\gamma_{i+1-k}\right)-\eps(\ro_i+\ro_{i+1})\\
&=\sum_{n=1}^{g-1}[t_{a_n}]\otimes Y_{n}
\left(\gamma_{i+1}+(-1)^{g-1}\gamma_{i+1}\right)-\eps(\ro_i+\ro_{i+1}).
  \end{aligned}
\]
If $g$ is even, this relation is trivial, and if $g$ is odd it gives
\[\begin{aligned}
   r_i^{(E3)}&+r_{i+1}^{(E3)}\!:0=2\sum_{n=1}^{g-1}[t_{a_n}]\otimes Y_{n}
\left(\gamma_{i+1}\right)-(\ro_i+\ro_{i+1})\\
&=2(t_{1,i+1}+\cdots+t_{i,i+1}-t_{i+1,i}-\cdots-t_{g-1,i})-(\ro_i+\ro_{i+1})\\
&=(*)+2(t_{i,i}+t_{i,i+1})-(2t_{i,i}+\ro_i+\ro_{i+1}).
  \end{aligned}
\]
Hence this relation gives no new information.

Relation $(E4)$ gives no new information, hence we proved the following theorem.
\begin{tw}\label{tw:hom:positive}
If $g\geqslant 3$, then 
\[H_1(\cM^{h+}(N_g); H_1(N_g;\zz))=\zz_2\oplus\zz_2.\]
\end{tw}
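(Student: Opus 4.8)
The plan is to compute $H_1(\cM^{h+}(N_g);H_1(N_g;\zz))$ from the formula
\[H_1(G;M)=\gen{\kre{X}}\cap \ker\kre{\partial}_1/\gen{\kre{R}},\]
using the generators (F1)--(F4) of $\gen{\kre{X}}\cap\ker\kre{\partial}_1$ obtained in the preceding Proposition, and the relations $\kre{r}\otimes\gamma_i$ coming from rewriting the defining relations (E1)--(E5) of $\cM^{h+}(N_g)$ (Theorem \ref{tw:pres:pos}) via formula \eqref{eq_rew_rel}. First I would rewrite each family of relations in terms of the generators (F1)--(F4). The commutation relations (E1) and the braid relations (E2) together force all generators of type (F1) to become homologous to a single element $t$, and force the generators of type (F2) into a quotient killed by $2$; the relation (E5), which expresses that $\ro$ commutes with $t_{a_j}$, then shows that the generators of type (F3) die, that $2t=0$, and pins down the remaining structure. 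This is the bookkeeping already carried out in the ``Rewriting relations'' subsection; I would simply collect those conclusions.

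The substantive step is the relation (E3), $(t_{a_1}\cdots t_{a_{g-1}})^g=1$ (for $g$ even) or $=\ro$ (for $g$ odd). Here I would expand $\kre{r}\otimes\gamma_i$ as a telescoping double sum $\sum_{k=0}^{g-1}\sum_{n=1}^{g-1}(t_{a_1}\cdots t_{a_{g-1}})^k t_{a_1}\cdots t_{a_{n-1}}[t_{a_n}]\otimes\gamma_i$, pull the group elements across the tensor as inverse matrices acting on $\gamma_i$, and use the explicit matrices $\psi(t_{a_i}^{-1})$ from the action subsection. Setting $Y_n=\psi(t_{a_1}\cdots t_{a_{n-1}})^{-1}$, the key computation is $Y_g^k\gamma_i=(-1)^k\gamma_{i-k}$ (indices mod $g$) together with the case analysis for $Y_n\gamma_i$. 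To tame the resulting $g$ relations $r_1^{(E3)},\dots,r_g^{(E3)}$, I would replace them by the equivalent system $r_i^{(E3)}+r_{i+1}^{(E3)}$ for $i=1,\dots,g-1$ together with $r_g^{(E3)}$. The sum relations simplify dramatically because $\gamma_{i-k}+\gamma_{i+1-k}$ telescopes against the alternating signs, leaving (for $g$ even) something trivial, and (for $g$ odd) a relation already implied by (F3)-type relations; the relation $r_g^{(E3)}$ itself shows that generator (F4) is superfluous. Finally (E4), $\ro^2=1$, contributes nothing new.

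After all these reductions the only surviving generators are $t$ (coming from (F1), with $2t=0$) and one generator of type (F2) (also killed by $2$), and one checks that no further relation identifies them or kills either one; hence $H_1(\cM^{h+}(N_g);H_1(N_g;\zz))\cong\zz_2\oplus\zz_2$. The main obstacle is the relation (E3): carrying out the matrix bookkeeping correctly for both parities of $g$, keeping careful track of the indices modulo $g$ in $Y_g^k\gamma_i$ and of the ``boundary'' term $Y_n\gamma_1=2\gamma_1+\cdots+2\gamma_{n-1}+\gamma_n$, and recognizing which of the resulting expressions are already consequences of the earlier relations (the ``$(*)$'' terms). Everything else is routine linear algebra over the non-free module $H_1(N_g;\zz)$, where one must only be mildly careful that the single relation $2(\gamma_1+\cdots+\gamma_g)=0$ is respected throughout.
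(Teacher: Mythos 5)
Your proposal is correct and follows essentially the same route as the paper: the same identification $H_1=\gen{\kre{X}}\cap\ker\kre{\partial}_1/\gen{\kre{R}}$ with generators (F1)--(F4), the same rewriting of (E1), (E2), (E5) to reduce (F1) and (F2) to single order-two classes and kill (F3), the same treatment of (E3) via the matrices $Y_n$, the replacement of $r_1^{(E3)},\dots,r_g^{(E3)}$ by the sums $r_i^{(E3)}+r_{i+1}^{(E3)}$ together with $r_g^{(E3)}$ (which eliminates (F4)), and the observation that (E4) adds nothing. This is precisely the computation carried out in the paper's ``Rewriting relations'' subsection, so nothing further needs to be said.
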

\subsection{Computing $H_1(\cM^{h}(N_g); H_1(N_g;\zz))$}
If $G=\cM^{h}(N_g)$, then by Proposition \ref{gen_ker_roz} the kernel $\gen{\kre{X}}\cap\ker\kre{\partial}_1$
has two more types of generators: (F5), (F6), and by Theorem \ref{th:004} there are three additional 
relations: (C4),(C5),(C8).
\[\begin{aligned}
   r_i^{(C4)}\!:0&=[s]\otimes \gamma_i+s[s]\otimes\gamma_i=s_i+[s]\otimes\psi(s)\gamma_i\\
&=2(-1)^i\left(s_1+s_2+\cdots+s_{i-1}+\frac{1+(-1)^i}{2}s_i\right).
  \end{aligned}
\]
This (inductively) implies that each generator of type (F5) has order at most 2.
\[\begin{aligned}
   r_i^{(C8)}\!:0&=([\ro]+\ro[s]-[s]-s[\ro])\otimes\gamma_i=\ro_i-2s_i-[\ro]\otimes \psi(s)\gamma_i\\
&=\ro_i-2s_i-(-1)^i(2\ro_1+2\ro_2+\cdots +2\ro_{i-1}+\ro_i)\\
&=\begin{cases}
   -2(s_i-\ro_1-\cdots-\ro_i)&\text{ for $i$ odd}\\
-2(s_{i-1}+s_i)+2(s_{i-1}-\ro_1-\cdots -\ro_{i-1})&\text{ for $i$ even.}
  \end{cases}
  \end{aligned}
\]
This implies that generator (F6) has also order at most 2.
\[\begin{aligned}
   r_i^{(C5)}\!:0&=([t_{a_j}]+t_{a_j}[s]+t_{a_j}s[t_{a_j}]-[s])\otimes \gamma_i\\
&=t_{j,i}+[s]\otimes\psi(t_{a_j}^{-1})\gamma_i+[t_{a_j}]\otimes \psi(s)\psi(t_{a_j}^{-1})\gamma_i-s_i.
  \end{aligned}
\]
If $i\neq j$ and $i\neq j+1$, then 
\[r_i^{(C5)}\!:0=(-1)^i\left(2t_{j,1}+\cdots+2t_{j,i-1}+(1+(-1)^i)t_{j,i}\right),\]
which gives no new information. If $i=j$ or $i=j+1$ and $j$ is odd, then 
\[r_i^{(C5)}\!:0=(*)\pm [(s_j+s_{j+1})+(t_{j,j}+t_{j,j+1})],\]
where as usual $(*)$ denotes homologically trivial element. This relation implies that generators (F5) are superfluous.

Finally, if $i=j$ or $i=j+1$ and $j$ is even then 
\[r_i^{(C5)}\!:0=(*)\pm[(s_{j+1}-\ro_1-\cdots-\ro_{j+1})-(s_{j-1}-\ro_1-\cdots-\ro_{j-1})].\]
This implies that all generators of type (F6) are homologous, hence we proved the following.
\begin{tw}\label{tw:hom:full}
If $g\geqslant 3$, then 
\[H_1(\cM^{h}(N_g); H_1(N_g;\zz))=\zz_2\oplus\zz_2\oplus\zz_2.\]
\end{tw}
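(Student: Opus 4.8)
The plan is to follow exactly the template of the computation of $H_1(\cM^{h+}(N_g);H_1(N_g;\zz))$ that precedes this theorem, carrying over everything that survives and processing only the three new ingredients. By Proposition \ref{gen_ker_roz} the group $\gen{\kre X}\cap\ker\kre\partial_1$ for $G=\cM^h(N_g)$ is generated by (F1)--(F6), and the defining relations coming from the presentation of Theorem \ref{th:004} are the old relations $r_{t_j},r_\ro,r_s$ together with those rewritten from (C1)=(E1), (C2)=(E2), (C3)=(E3), (C6)=(E4), (C7)=(E5), plus the three genuinely new ones obtained from (C4), (C5), (C8). Since (C1),(C2),(C3),(C6),(C7) coincide with (E1)--(E5), the analysis already carried out in the proof of Theorem \ref{tw:hom:positive} applies verbatim: it shows that generators (F1) collapse to a single element $t$ with $2t=0$, that (F2) generates a group of order at most $2$, and that (F3), (F4) become superfluous. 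Thus before touching (C4), (C5), (C8) the candidate group is generated by $t$, the common value of (F2), and the classes of (F5), (F6), with $t$ of order dividing $2$ and (F2) of order dividing $2$.

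Next I would rewrite the three new relations using formula \eqref{eq_rew_rel} and the matrix $\psi(s)$ given in Section 4. From $s^2=1$ (relation (C4)) one gets, for each $i$, a relation $r_i^{(C4)}$ that is (up to sign and factor $2$) a $\zz$-linear combination of $s_1,\dots,s_i$; by an easy induction on $i$ this forces every generator of type (F5) to have order at most $2$. From $\ro s\ro=s$ (relation (C8)) one similarly obtains relations $r_i^{(C8)}$ which, after reorganising into the (F5)-- and (F6)--combinations, show that each generator of type (F6) also has order at most $2$. Finally the mixed relation $st_{a_j}s=t_{a_j}^{-1}$ (relation (C5)) gives, for each pair $(i,j)$, a relation $r_i^{(C5)}$; splitting into the cases $i\notin\{j,j+1\}$ (no new information), $i\in\{j,j+1\}$ with $j$ odd (which identifies the relevant (F5)--combination with a (F2)--combination, killing (F5)), and $i\in\{j,j+1\}$ with $j$ even (which identifies consecutive (F6)--generators, so all of them become homologous to a single class, call it $u$). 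After this reduction the group is generated by $t$, the class of (F2), and $u$, each of order at most $2$, so $H_1(\cM^h(N_g);H_1(N_g;\zz))$ is a quotient of $\zz_2\oplus\zz_2\oplus\zz_2$.

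To finish I would argue that this quotient is in fact all of $\zz_2^3$, i.e.\ that $t$, the class of (F2), and $u$ are independent and nonzero. For the first two this follows from the restriction map $H_1(\cM^h(N_g);H_1(N_g;\zz))\to H_1(\cM^{h+}(N_g);H_1(N_g;\zz))$: by Theorem \ref{tw:hom:positive} the target is $\zz_2\oplus\zz_2$ and, tracking the proof there, it is generated precisely by the images of $t$ and of (F2), so these two classes are independent of order exactly $2$ upstairs. For $u$ one needs to see that the new direction coming from $s$ (equivalently from (F6)) is not killed and is not already accounted for by $t$ and (F2); this is most cleanly seen by exhibiting an explicit $\zz_2$--valued homomorphism out of $\gen{\kre X}\cap\ker\kre\partial_1$ that vanishes on all relations and on (F1), (F2) but is nonzero on a generator of type (F6) — for instance the functional extracting the coefficient of $s_1$ modulo $2$, checked against $r_{t_j},r_\ro,r_s$ and $r_i^{(C4)},r_i^{(C5)},r_i^{(C8)}$.

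**Main obstacle.** The routine but delicate part is the bookkeeping in the rewriting of $r_i^{(C4)},r_i^{(C5)},r_i^{(C8)}$: one must use the exact matrix $\psi(s)$ (with its alternating $\pm2$ entries) to see that each of these relations, after absorbing terms that are "$(*)$", i.e.\ homologous to $0$ by previously obtained relations, reduces to the clean statements above. The genuinely substantive point — the only place where something could in principle go wrong — is the last step, proving that the bound $\zz_2^3$ is attained rather than a proper quotient; the restriction-to-$\cM^{h+}$ argument disposes of two of the three generators immediately, so the crux is producing one honest $\zz_2$--valued cocycle detecting the $s$--direction.
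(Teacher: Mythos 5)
Your proposal follows essentially the same route as the paper: you reuse the analysis of (E1)--(E5) from the computation for $\cM^{h+}(N_g)$, add the generators (F5), (F6) from Proposition \ref{gen_ker_roz}, and process the three new relations (C4), (C5), (C8) to exactly the same effect (the relation $s^2=1$ bounds each (F5) generator by order $2$, $\ro s\ro=s$ bounds (F6) by order $2$, and $st_{a_j}s=t_{a_j}^{-1}$ makes (F5) superfluous for $j$ odd and identifies all the (F6) generators for $j$ even). One remark: your closing ``lower bound'' step is not actually needed, because $H_1(G;M)=\gen{\kre{X}}\cap\ker\kre{\partial}_1/\gen{\kre{R}}$ is an exact formula, so once every defining relation has been rewritten in terms of (F1)--(F6) the quotient is determined, not merely bounded above; moreover the one questionable ingredient you invoke there --- a ``restriction map'' $H_1(\cM^{h}(N_g);H_1(N_g;\zz))\to H_1(\cM^{h+}(N_g);H_1(N_g;\zz))$ carrying $t$ and the (F2) class to themselves --- does not exist in that naive form (the transfer to the index-$2$ subgroup involves a sum over coset representatives, and its composite with corestriction is multiplication by $2$, hence zero on these $2$-torsion classes). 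Your alternative of exhibiting explicit $\zz_2$-valued functionals vanishing on all relations is the correct way to certify that nothing further collapses, and is in substance what the completed rewriting already provides.
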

\section*{Acknowledgements}
The author wishes to thank the referee for his/her helpful suggestions.
\bibliographystyle{abbrv}

\end{document}